\normalsize \setlength{\headsep}{2\baselineskip}
\numberwithin{equation}{section}
\newtheorem{thm}                    {Theorem}
\newtheorem{prop}[equation]     {Proposition}
\newtheorem{cor}[equation]          {Corollary}
\theoremstyle{definition}
\theoremstyle{remark}
\newtheorem{remark}[equation]       {Remark}
\begin{document}
\title[Well-posedness of Einstein's Equation]{Well-posedness of Einstein's Equation with Redshift Data}
\thanks{The author thanks the Mathematics Departments of UW-Madison and UW-Oshkosh and
 the UW-Madison Physics Department for the use of their resources during the course of the present research.
The author would particularly like to thank
Prof. Daniel Chung of the UW-Madison Physics Department for his conversations and
suggestions leading to
the present article.}

\begin{abstract}{We study the solvability of a system of ordinary differential equations
derived from null geodesics of the LTB metric with data given in terms of a so-called redshift parameter. Data is introduced along these geodesics by the luminosity distance function. We check our results with luminosity distance depending on the cosmological constant and with the well-known FRW model.}\end{abstract}
\author{Christopher J. Winfield}
\email{winfield@madscitech.org}
\subjclass{83F05, 34A34}
\keywords{redshift parameter, ordinary differential equations, LTB cosmology, luminosity distance}
\maketitle
\section*{Introduction}

Resulting from the Lema\^itre-Tolman-Bondi metric
\begin{equation}ds^2 = -dt^2+\frac{R^{\prime}(t,r)^2dr^2}{1+2E(r)}+R(t,r)^2d\gO^2
\label{maineq}\end{equation}
are so-called symmetric dust solutions to the Einstein equation given by
 \begin{align}
\left(\frac{\dot{R}}{R}\right)^2&=\frac{2E}{R^2}+\frac{2M}{R^3}
\label{original1}
\\
\rho(t,r)&=\frac{M^{\prime}(r)}{R(t,r)^2R^{\prime}(t,r)}\label{original2}
\end{align}
(c.f. \cite{skchh}) for some suitable $\rho$ (energy density) where
superscript $\prime$ and $\cdot$ denote partial derivatives with
respect to $r$ and $t,$ respectively. Setting $\gs$ $\df$
$\sgn\dot{R}$, $\gd$ $\df$ $\sgn R^{\prime},$ $A\df B\sqrt{1+2E}$ and
$B\df{\gs\sqrt{2E+2M/R}},$ we study resulting system
\cite{cr,c}
\begin{align}
\frac{dr}{dz}&=\frac{\sqrt{1+2E}}{(1+z)\partial^2_{r,t}R(t,r)}
&=\frac{A}{E^{\prime}+M^{\prime}/R-MR^{\prime}/R^2} \label{dr}
\\
\frac{dt}{dz}&=\frac{-|R^{\prime}|}{(1+z)\partial^2_{r,t}R(t,r)}
&=\frac{-BR^{\prime}\gd}{E^{\prime}+M^{\prime}/R-MR^{\prime}/R^2},
\label{dt}
\end{align}
taken along null geodesics of (\ref{maineq}). Here data is given
for the function $R,$ prescribing values $R(t(z),r(z))$ along curves
given by (\ref{dr}) and (\ref{dt}). 
As a result,
corresponding solutions of this system provide
maps 
\begin{equation}(E(r),D_L(z),R_0(r))\,\,\rightarrow\,\,(r(z),t(z),M(r(z)))\label{map}\end{equation} as
introduced in \cite{cr} which we study in some detail in
this article. 

As an application of our analysis, we will consider data
given in the form
\begin{equation}
R(t(z),r(z))=\frac{D_L(z)}{(1+z)^2}=\frac{\int_1^{1+z}\cI(y)dy}{1+z}\label{R}
\end{equation} for $D_L(z)=(1+z)\int_1^{1+z}\cI(y)dy$ with $\cI(y)=1/\sqrt{\gO_{\gL}+(1-\gO_{\gL})y^3}$
for a real parameter $0\leq$ $\gO_{\gL}$ $\leq 1.$ Here, $D_L$ is
generally referred to as "luminosity distance" and, in particular
models, $\gO_{\gL}$ $=$ $\frac{\gL}{3H_0^2}$ is directly proportional to the
so-called "cosmological constant" $\gL$ \cite{c, cpt}. 
For further details of the physical and
mathematical derivation of the present problem, the author recommends
the aforementioned articles along with \cite{e, i, ks, pm, we, wmt} - to
name but a few.

This work is physically motivated by competing cosmological theories in
explaining certain observations of matter distribution and cosmic inflation. Such theories include
those of "dark energy" \cite{f}, certain metric perturbations from the FRW model
\cite{kmr,r,m}, radial inhomogeneities of the unperturbed LTB model (via $E(r),$ $R_0(r)$ and $M(r)$), and the cosmological constant (here via $D_L$) - with our work
involving the later two.
Here, we study the map (\ref{map}) mostly on purely mathematical grounds, presenting a
framework of analysis and, in a special case, estimates on resulting functions $M$ in terms of $z.$
Furthermore, we test our results for certain
functions $D_L,$ $E,$ and $R_0,$ arising from various FRW-type models,
and study singularities of $M$ as indications of (in-) compatibility of these
models.

\section{Singularities}
From the Chain Rule (c.f. equation (14) \cite{cr}) we observe that
$R^{\prime}$ takes the form $$R^{\prime}={\cal
F}(R,R_0,R_0^{\prime},E,E^{\prime},M,t) +M^{\prime}{\cal
G}(R,R_0,E,E^{\prime},M).$$  With $\dot{R}$ $=\gs\sqrt{2E+2M/R}$ and
$R_0(r)\df R(r,t_0)$ for a fixed $t_0>0,$ we restrict $R,R_0,t> 0,$
$M\geq -ER,$ $E> 0$ and set
\begin{equation}J(R,R_o,M,E,t)\df \sqrt{2}(t-t_0)-
\gs\int_{R_0}^{R}\sqrt{\frac{\gt}{\gt E + M}}d\gt=0\label{manifold}
\end{equation}
solutions of which define smooth manifolds $\fO^{\pm}$ depending on
constant $\gs=$ $\pm 1$, respectively.

We introduce notation: For a given function $f$ $=$ $f(t,r),$ depending implicitly or
explicitly on $(t,r),$ we will denote $f[z]$ $\df$ $f(t(z),r(z))$
and, with slight abuse of notation, set $\derz{f}$ $\df$
$\derz{f[z]}.$ We now set
\begin{equation}R^{\prime}=\cF+\cG\frac{dM/dz}{dr/dz}\label{Rprime}\end{equation} where, from
the chain rule, with subscript denoting the associated partial derivative,
\begin{align}-(\partial_RJ)\cF&=E^{\prime}\partial_EJ+R_0^{\prime}\partial_{R_0}J
\\
-(\partial_R J)\cG&=\partial_M J.
\end{align}
with $\xi$ $\df$ $M/R,$ $\xi^{\sharp},$ $\df$ $M/R_0,$ $\fh$
$\df$ $R_0/R,$ and
\begin{align}
J_R&=-\gs/\sqrt{E+\xi}&
J_{R_0}&=\gs/\sqrt{E+\xi^{\sharp}}\label{Jdiff}\\
J_M&=-\frac{\gs}{2}\int_1^{\fh}\frac{\gn^{1/2}d\gn}{(E\gn+\xi)^{3/2}}&
J_E&=-\frac{\gs R}{2}\int_1^{\fh}\frac{\gn^{3/2}d\gn}{(E\gn+\xi)^{3/2}}
\notag
\end{align}

Substituting (\ref{Rprime}) into equations (\ref{dr}) and
(\ref{dt}), we obtain
\begin{align}&\left(E^{\prime}-\frac{M}{R^2}\cF\right)\cdot\derz{r}+\frac{\derz{M}}{R}-\frac{M\derz{M}}{R^2}\cG=A
\label{dr2}\\
&\left(E^{\prime}-\frac{M}{R^2}\cF\right)\cdot\derz{r}\derz{t}+\left(\frac{\derz{M}}{R}-\frac{M\derz{M}}{R^2}\cG\right)\derz{t}\label{dt2}\\
&=-\gd\cdot
B\cdot\left(\cF+\frac{dM/dz}{dr/dz}\cG\right)\derz{r}\notag\end{align}
We then substitute
$$\left(E^{\prime}-\frac{M}{R^2}\cF\right)\derz{r}=A-\left(\frac{\derz{M}}{R}-\frac{M\derz{M}}{R^2}\cG\right)$$
so that equation (\ref{dt2}) becomes
\begin{equation}
A\derz{t}=-\gd\cdot B\cdot\left(\cF\derz{r}+\derz{M}\cG\right).
\label{dt3}\end{equation}
Equation (\ref{dt3}) can be verified by equations (\ref{original1}) and (\ref{original2}).

Now, equations
(\ref{dr2}), (\ref{dt3}), and (\ref{Rprime}) along with the Chain Rule result in the following system:
\begin{align}
&\left(E^{\prime}-\frac{M}{R^2}\cF\right)\derz{r}+\left(\frac{1}{R}-\frac{M\cG}{R^2}\right)\derz{M}&=A
\notag\\
&\gd B\cF\derz{r}+A\derz{t}+\gd B\cG\derz{M}&=0
\notag\\
&\cF\derz{r}+
\gs\sqrt{(2E+2M/R)}\derz{t}+\cG\derz{M}&=\derz{R}\notag
\end{align} which
we may write in matrix form as
\begin{equation}\cU\derz{\vec {X}}=\vec{Y}\label{sys}
\end{equation} for $$
\cU\df\left(\begin{array}{ccc}
  E^{\prime}-\frac{M}{R^2}\cF & 0 & \frac{1}{R}-\frac{M\cG}{R^2} \\
  \gd B\cF & A & \gd\cG B \\
  \cF & \gs\sqrt{2E+2M/R} & \cG
\end{array}\right)$$
$$\vec{X}=\left(\begin{array}{c}
  r \\
  t \\
  M
\end{array}\right),
\vec{Y}=\left(\begin{array}{c}
  A \\
  0 \\
  \derz{R}
\end{array}\right).$$

We check the invertibility of $\cU$ as we compute
\begin{align}\text{ det}\cU =&(E^{\prime}-\frac{M}{R^2}\cF)(A\cG
-\gs\gd\cG
B\sqrt{2E+2M/R})\notag\\
+&(\frac{1}{R}-\frac{M\cG}{R^2})(\gd\gs B\cF\sqrt{2E+2M/R}-\cF A)\notag\\
=&B(E^{\prime}\cG-\frac{\cF}{R})(\sqrt{1+2E}-\gs\gd \sqrt{2E+2M/R})\notag
\end{align}
From these computations we conclude
\begin{prop} \label{first} Suppose that $E,R>0$ with $\partial_tR,$ $\partial_rR,$
$\partial^2_{t,r}R\neq 0.$ Then, $\cU^{-1}$ is a smooth function of
$R,$ $R_0,$ $R_0^{\prime},$ $E,$ $E^{\prime},$ and $M$ except for
the following cases: Either
\begin{itemize}
\item[1.)]  both $\gd$ $=$ $\gs$ and $R=2M$; or,
\item[2.)] $E^{\prime}R\cG=\cF.$
\end{itemize}
\end{prop}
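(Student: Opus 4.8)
The plan is to read off the conclusion directly from the determinant computation already carried out in the excerpt, together with Cramer's rule, and then to interpret when the relevant quantities are nonzero in terms of the hypotheses. First I would record that the explicit calculation above gives
\[
\det\cU = B\Bigl(E^{\prime}\cG-\tfrac{\cF}{R}\Bigr)\bigl(\sqrt{1+2E}-\gs\gd\sqrt{2E+2M/R}\bigr),
\]
so $\cU$ is invertible precisely when none of the three factors vanishes. I would then treat the three factors in turn. The factor $B=\gs\sqrt{2E+2M/R}$ is nonzero under the standing assumption $E,R>0$ together with $M\geq -ER$ and $\partial_tR=\dot R\neq 0$, since $\dot R=\gs\sqrt{2E+2M/R}$ forces $2E+2M/R>0$; I should note this uses $\partial_tR\neq0$ from the hypothesis. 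The middle factor $E^{\prime}\cG-\cF/R$ vanishes exactly when $E^{\prime}R\cG=\cF$, which is case (2.). The last factor $\sqrt{1+2E}-\gs\gd\sqrt{2E+2M/R}$: when $\gs\gd=-1$ it is a sum of nonnegative terms with $\sqrt{1+2E}>0$, hence nonzero; when $\gs\gd=+1$, i.e. $\gd=\gs$, it vanishes iff $1+2E=2E+2M/R$, i.e. iff $R=2M$, which is case (1.). This accounts for all degeneracies of $\det\cU$.

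Next I would address the smoothness claim, since invertibility of $\cU$ is not quite the same as $\cU^{-1}$ being a \emph{smooth} function of the listed variables. By Cramer's rule, $\cU^{-1}$ has entries that are polynomials in the entries of $\cU$ divided by $\det\cU$. The entries of $\cU$ are built from $E$, $E^{\prime}$, $R$, $B$, $A$, $\cF$, $\cG$; and from the formulas \eqref{Jdiff} together with the defining relations for $\cF$ and $\cG$, one sees that $\cF$ and $\cG$ are themselves smooth functions of $R$, $R_0$, $R_0^{\prime}$, $E$, $E^{\prime}$, $M$ on the region where $E+\xi>0$ and $E+\xi^{\sharp}>0$ (which again follows from $E,R,R_0>0$, $M\geq-ER$ and the nonvanishing of $\dot R$). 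Here I should remark that the integrals defining $J_M$ and $J_E$ over $\gn\in[1,\fh]$ have integrand with denominator $(E\gn+\xi)^{3/2}$ bounded away from zero on that interval, so they depend smoothly on the parameters; one also needs $R^{\prime}=\partial_rR\neq0$ to guarantee $M^{\prime}$ is recovered smoothly from \eqref{Rprime}, though strictly for $\cU^{-1}$ itself only $\cF,\cG$ smooth is needed. Thus on the complement of cases (1.) and (2.) every entry of $\cU$ and the scalar $1/\det\cU$ are smooth, so $\cU^{-1}$ is smooth.

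Finally I would assemble the argument: fix a point in parameter space satisfying $E,R>0$ and $\partial_tR,\partial_rR,\partial^2_{t,r}R\neq0$ and lying outside cases (1.) and (2.); by the factorization of $\det\cU$ and the sign analysis above, $\det\cU\neq0$ there, and by the preceding paragraph all data entering Cramer's rule vary smoothly, so $\cU^{-1}$ is a smooth function of $R,R_0,R_0^{\prime},E,E^{\prime},M$ in a neighborhood. Conversely in case (1.) or (2.) the determinant vanishes and $\cU$ fails to be invertible. I expect the main obstacle to be the bookkeeping in the smoothness step — specifically verifying that $\cF$ and $\cG$, which are ratios of the $J$-derivatives in \eqref{Jdiff}, have nonvanishing denominators (i.e. that $J_R=-\gs/\sqrt{E+\xi}$ is finite and nonzero) precisely under the stated positivity hypotheses, and checking that the role of the hypothesis $\partial^2_{t,r}R\neq0$ is exactly to keep the system \eqref{sys} nondegenerate in the first place (it enters through the denominators in \eqref{dr}–\eqref{dt}). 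The determinant identity itself is already done in the excerpt, so that part is purely a matter of quoting it.
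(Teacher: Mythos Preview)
Your approach is correct and matches the paper's: the proposition is stated immediately after the determinant computation with the phrase ``From these computations we conclude,'' so the paper's proof is precisely the factorization of $\det\cU$ that you quote and analyze factor by factor. Your additional remarks on Cramer's rule and the smoothness of $\cF,\cG$ via the $J$-derivatives simply make explicit what the paper leaves implicit.
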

We may extend the domain of $\cU$ to include $-1/2<$ $E<0$, say, but
for simplicity we impose the above hypothesis throughout the rest of
this section. We continue with
\begin{prop}\label{second}
Suppose that for some $z^*>0$, $\gd[z^*]=\gs[z^*]$ and that
$\derz{R[z]}|_{z=z^*}$ $=0.$ Then, the matrix $\cU[z]$ is singular at
$z=z^*.$
\end{prop}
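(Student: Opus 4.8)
The plan is to show that the two hypotheses together force the scalar factor $\sqrt{1+2E}-\gs\gd\sqrt{2E+2M/R}$ to vanish at $z^{*}$, and then to observe that this is exactly the factor appearing in the expression for $\det\cU$ computed just before Proposition~\ref{first}. So the Proposition will follow from that determinant computation (equivalently, from case~1.) of Proposition~\ref{first}).

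First I would compute $\derz{R}$ directly from the chain rule, $\derz{R}=(\partial_{t}R)\,\derz{t}+(\partial_{r}R)\,\derz{r}$, and substitute the governing equations (\ref{dr})--(\ref{dt}). Using that $|R'|=\gd R'$ by the definition of $\gd$, and that $\dot R=\gs\sqrt{2E+2M/R}$ (so in particular $\dot R=B$ and $A=B\sqrt{1+2E}$), the two terms combine to
\[
\derz{R}\;=\;\frac{R'\bigl(\sqrt{1+2E}-\gs\gd\sqrt{2E+2M/R}\bigr)}{(1+z)\,\partial^{2}_{r,t}R}\,.
\]
The key point is that the numerator contains precisely the factor $\sqrt{1+2E}-\gs\gd\sqrt{2E+2M/R}$ that also occurs in $\det\cU=B\bigl(E'\cG-\cF/R\bigr)\bigl(\sqrt{1+2E}-\gs\gd\sqrt{2E+2M/R}\bigr)$.

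Next I would evaluate at $z=z^{*}$. Under the standing assumptions of this section ($E,R>0$, $\partial_{r}R\neq 0$, $\partial^{2}_{r,t}R\neq 0$) the quantities $R'[z^{*}]$ and $(1+z^{*})\,\partial^{2}_{r,t}R|_{z=z^{*}}$ are nonzero, so $\derz{R}|_{z=z^{*}}=0$ forces $\sqrt{1+2E}=\gs\gd\sqrt{2E+2M/R}$ at $z^{*}$. Since $\gd[z^{*}]=\gs[z^{*}]$ gives $\gs\gd=1$ there, this reads $1+2E=2E+2M/R$, i.e. $R=2M$ at $z^{*}$; hence both $\gd=\gs$ and $R=2M$ hold at $z^{*}$, which is case~1.) of Proposition~\ref{first}. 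Equivalently, substituting the vanishing factor back into the determinant formula gives $\det\cU[z^{*}]=0$. Either route yields that $\cU[z^{*}]$ is not invertible, which is the assertion.

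I do not expect a genuine obstacle: the entire argument is the remark that $\derz{R}$ and $\det\cU$ share the factor $\sqrt{1+2E}-\gs\gd\sqrt{2E+2M/R}$. The only thing needing care is the bookkeeping of signs and of nonvanishing denominators --- keeping straight $|R'|=\gd R'$, $\dot R=\gs\sqrt{2E+2M/R}=B$, $A=B\sqrt{1+2E}$, and verifying that $B$, $R'$, $\partial^{2}_{r,t}R$ and the common denominator $E'+M'/R-MR'/R^{2}$ of (\ref{dr})--(\ref{dt}) are all nonzero at $z^{*}$ under the running assumptions, so that dividing out to isolate the bracketed factor is legitimate.
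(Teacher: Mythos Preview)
Your proposal is correct and follows essentially the same approach as the paper's own proof: compute $\derz{R}$ via the chain rule using (\ref{dr})--(\ref{dt}), extract the factor $\sqrt{1+2E}-\gs\gd\sqrt{2E+2M/R}$, use the nonvanishing of $R'$ and $\partial^{2}_{r,t}R$ to conclude this factor vanishes at $z^{*}$, then invoke $\gs\gd=1$ to get $R=2M$ and apply Proposition~\ref{first}. The only difference is that you make the sign bookkeeping and the nonvanishing-denominator checks a bit more explicit than the paper does.
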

\begin{proof}
We have from the Chain Rule and equations (\ref{dr}) and (\ref{dt}) that
\begin{align}
\derz{R}&=R^{\prime}\frac{dr}{dz}+\dot{R}\frac{dt}{dz}\label{chain}\\
&=R^{\prime}\frac{\sqrt{1+2E}}{(1+z)\partial^2_{r,t}R}
-\dot{R}\frac{|R^{\prime}|}{(1+z)\partial^2_{r,t}R}
\notag\\
&=R^{\prime}\frac{\sqrt{1+2E}-\gd\gs\sqrt{2E+2M/R}}{(1+z)\partial^2_{r,t}R}
\notag\end{align}By our hypotheses on the partial derivatives of $R$ we may conclude
\begin{equation}\left(\gs\gd\sqrt{2E+2M/R}\right)[z^*]
=\left(\sqrt{1+2E}\right)[z^*]
\notag\end{equation}
With $\gs=\gd$ at $z=z^*$, we have that $\gs\gd=\gd^2=1$ and that
$\sqrt{2E+2M/R}$ $=$ $\sqrt{1+2E}$ so that $2M[z^*]$ $=R[z^*].$ Then from Proposition \ref{first}
we see that $\det U[z^*]=0.$
\end{proof}
We note that the type of singularity of item 1) of Proposition
\ref{first} appears analogous to that of the well-known
"Schwarzschild" singularity: It is not yet clear here if this is
merely an artifact of the specific model or if such singularities
are removable by passing to alternate coordinate systems or metrics (c.f. \textsection 31 \cite{wmt}, \textsection 6.4
\cite{wa}), taking us beyond the scope of the present article.

We may interpret item 2) of Proposition \ref{first} in terms of the
tangent bundles $T\fO^{\pm}$ (resp.) of manifolds obtained from
(\ref{manifold}). We may consider the transformation
$\phi^{\pm}:$ ${\Bbb R} \times (0,+\infty)$  $\rightarrow$ $\fO^{\pm}$
given by
$\phi^{\pm}(t,r)$ $\df$ $(R(t,r),R_0(r),E(r),M(r),t)$
and $d\phi$ as a push forward, to interpret corresponding solutions
to
$$E^{\prime}R\partial_M
J=E^{\prime}\partial_EJ+R_0^{\prime}\partial_{R_0}J
$$ as subsets $\cM^{\pm}$ of $T\fO^{\pm}$ (resp.) in coordinate form.
Let $\cT$ denote the set $(\gO^{+}\setminus\pi \cM^{+})$ $\bigcup$
$(\gO^{-}\setminus\pi \cM^{-})$ where $\pi$ denotes the natural
projection $\pi: T\cM$ $\rightarrow$ $\cM$ of a manifold $\cM$.

By calculating $\cU^{-1}\vec{Y}$ from (\ref{sys}) with $R_z$ $\df$
$\derz{R[z]},$ we arrive at the following system of ordinary
differential equations:
\begin{align}\label{ode}\derz{r}&
=\frac{\cG A R}{{
\cG}{E^{\prime}}R-{\cF}}-\frac{{R_z}\cdot(M{\cG}-R)
\sqrt{1+2E}} {R\cdot( \gd\gs\sqrt{2E+2M/R}-\sqrt{1+2E})( {
\cG}{E^{\prime}}R-{\cF})}
\\
\derz{t}&
= \frac{\gd\cdot{R_z}}{
\gd\gs\sqrt {2E+2M/R}-\sqrt{1+2E}}
\notag\\
\derz{M}&
=\frac{-{\cF}{R}A}{\cG E^{\prime}R-{\cF}} - \frac{R_z\cdot(
{R}^{2}{E^{\prime}}-{\cF}M)\sqrt{1+2E}}{R\cdot(\gd\gs\sqrt {2E+2M/R}-\sqrt{1+2E})
(\cG E^{\prime}R-{\cF})}
 \notag
\end{align}

We are ready to state
\begin{prop}\label{third}
The matrix $\cU$ is non-singular for $R\neq 2M$ provided
$(R,R_0,E,M, t)$ $\in$ $\cT.$ Indeed, if for some $z_0$ $>$ $0,$
these conditions hold for $(R,R_0,E,M,t)[z]|_{z=z_0},$ then the
system of equations (\ref{ode}) has a unique $C^{\infty}$ solution $\vec{X}[z]$
in some open interval containing $z_0.$
\end{prop}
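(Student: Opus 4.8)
The plan is to deduce Proposition \ref{third} as a direct consequence of the determinant computation preceding Proposition \ref{first} together with the standard existence-and-uniqueness theorem for ordinary differential equations. First I would record that, by the explicit formula $\det\cU = B(E^{\prime}\cG - \cF/R)(\sqrt{1+2E} - \gs\gd\sqrt{2E+2M/R})$, the matrix $\cU$ fails to be invertible only when one of the three factors vanishes. The factor $B = \gs\sqrt{2E+2M/R}$ cannot vanish under the hypothesis $E>0$, $M\geq -ER$, $R>0$ (and indeed $R\neq 2M$ is assumed so that the last factor behaves); the factor $\sqrt{1+2E} - \gs\gd\sqrt{2E+2M/R}$ vanishes exactly in the situation $\gd=\gs$, $R=2M$ of item 1) of Proposition \ref{first}, which is excluded by the assumption $R\neq 2M$; and the factor $E^{\prime}\cG - \cF/R$ vanishes exactly when $E^{\prime}R\cG = \cF$, i.e. item 2). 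But the condition $(R,R_0,E,M,t)\in\cT$ is, by the definition of $\cT$ via the sets $\cM^{\pm}$ and the correspondence $E^{\prime}R\partial_M J = E^{\prime}\partial_E J + R_0^{\prime}\partial_{R_0}J$, precisely the statement that item 2) does not occur. Hence on $\cT$ with $R\neq 2M$ all three factors are nonzero, so $\cU$ is non-singular; this proves the first sentence.

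For the second sentence I would pass to the system (\ref{ode}), which is exactly $\derz{\vec X} = \cU^{-1}\vec Y$. The point is that the right-hand side of (\ref{ode}) is built from $\cU^{-1}$ and $\vec Y$, and by Proposition \ref{first} (its smoothness clause) $\cU^{-1}$ is a $C^{\infty}$ function of $R,R_0,R_0^{\prime},E,E^{\prime},M$ away from the excluded cases, while $\vec Y$ depends smoothly on these data and on $R_z$. Evaluating along a prospective solution, the vector field defining (\ref{ode}) is a composition of smooth maps: the data functions $r\mapsto E(r), R_0(r), \dots$ (assumed smooth), the manifold constraint (\ref{manifold}) defining $R$ implicitly and smoothly (via $\partial_R J = -\gs/\sqrt{E+\xi}\neq 0$, so the implicit function theorem applies and $R$, hence $\cF,\cG$, are smooth functions of $(t,r)$), and the algebraic expressions in (\ref{ode}). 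Therefore, in a neighborhood of any point $(t_0,r_0,M_0) = (t,r,M)[z_0]$ at which $(R,R_0,E,M,t)\in\cT$ and $R\neq 2M$, the vector field on the right of (\ref{ode}) is $C^{\infty}$ in $(z,r,t,M)$.

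With the vector field $C^{\infty}$ near the initial point, I would invoke the Picard--Lindelöf theorem (existence, uniqueness, and smooth dependence) to conclude that (\ref{ode}) has a unique solution $\vec X[z] = (r(z),t(z),M(z))$ on some open interval about $z_0$, and that this solution is $C^{\infty}$ because the vector field is. One should also check consistency: the solution produced stays (for $z$ near $z_0$, by continuity) in the open set where $R\neq 2M$ and $(R,R_0,E,M,t)\in\cT$, so that the derivation of (\ref{ode}) from (\ref{sys}) remains valid along the solution and no division by zero occurs; this is immediate since $\cT$ minus $\{R=2M\}$ is open and the flow is continuous.

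The main obstacle is not analytic but bookkeeping: one must argue carefully that the condition $(R,R_0,E,M,t)\in\cT$ is genuinely equivalent to the non-vanishing of the factor $E^{\prime}\cG-\cF/R$, i.e. that the set $\cM^{\pm}\subset T\fO^{\pm}$ carved out by $E^{\prime}R\partial_M J = E^{\prime}\partial_E J + R_0^{\prime}\partial_{R_0}J$ projects under $\pi$ exactly onto the locus $E^{\prime}R\cG = \cF$ in $\fO^{\pm}$. This requires unwinding the definitions (\ref{Jdiff}) of $\partial_M J$ and $\partial_E J$, the relations $-(\partial_R J)\cF = E^{\prime}\partial_E J + R_0^{\prime}\partial_{R_0}J$ and $-(\partial_R J)\cG = \partial_M J$, and observing that $\partial_R J\neq 0$ so that the vanishing of $E^{\prime}\cG - \cF/R$ is equivalent to the vanishing of $-(\partial_R J)(E^{\prime}R\cG - \cF)/R = (E^{\prime}R\partial_M J - E^{\prime}\partial_E J - R_0^{\prime}\partial_{R_0}J)/R$, which is the defining equation of $\cM^{\pm}$; everything else is a routine appeal to the implicit function theorem and Picard--Lindelöf.
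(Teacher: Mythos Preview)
Your proposal is correct and follows essentially the same approach as the paper: verify that $\det\cU\neq 0$ under the stated hypotheses by ruling out the two cases of Proposition~\ref{first}, observe that the right-hand side of (\ref{ode}) is then smooth, and invoke the standard existence-and-uniqueness theorem. The paper's own proof compresses all of this into two sentences (``the elements of $\cU$ are continuously differentiable where $\det\cU$ is non-zero; apply standard ODE theory''), whereas you expand the bookkeeping---in particular the unwinding of $\cT$ via $\partial_R J\neq 0$ and the openness argument guaranteeing the solution stays in the good set---but the underlying argument is identical. One minor point: your justification that $B\neq 0$ from $E>0$, $M\geq -ER$ alone is not quite enough (equality $M=-ER$ would give $B=0$); the correct reason is the standing hypothesis $\partial_t R\neq 0$ from Proposition~\ref{first}, which directly gives $B=\dot R\neq 0$.
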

\begin{proof} It is clear that the elements of $U$
are continuously differentiable where $\det U$ is non-zero. The
result follows by applying standard theory of ordinary differential
equations \cite{cl}.
\end{proof}
To further investigate the  solvability of the system (\ref{ode}), we compute
\begin{align}
\cG E^{\prime} R-{\cF}&=\frac{E^{\prime}R\cdot(J_E/R-J_M)+R^{\prime}_0J_{R_0}}{J_R}\label{gerf}\\
&=
\sqrt{E+\xi}\left(\frac{RE^{\prime}}{2}\int_1^{\fh}\frac{s^{1/2}(s-1)ds}{(Es+\xi)^{3/2}}
-\frac{R_0^{\prime}}{\sqrt{E+\xi^{\sharp}}}\right).\notag
\end{align}
Since $\frac{\gn}{(E\gn+\xi)^3}\leq\frac{4}{27E\xi^2}$
we find
$$ \int_1^{\fh}\frac{\gn^{1/2}(\gn-1)d\gn}{(E\gn+\xi)^{3/2}}\leq \frac{2}{\xi\sqrt{27E}}\int_1^{\fh}(\gn-1)\,d\gn
=\frac{(\fh-1)^2}{\xi\sqrt{27E}}$$
Lacking any other simplifying assumptions, we thus obtain strong criteria
for local solvability:
\begin{prop}
System (\ref{ode}) is locally solvable at any point of $\fO^{\pm}$ where
$\gd\neq \gs$ or where $2M\neq R$ if either of the following holds:
\begin{itemize}
\item [1)] $\sgn E^{\prime}\neq \sgn R_0^{\prime}$
\item [2)]
    $|\frac{E^{\prime}\cdot(R_0-R)^2}{2M\sqrt{27E}}|<|\frac{R_0^{\prime}\sqrt{R_0}}{\sqrt{ER_0+M}}|$
\end{itemize}
Indeed, given $r_0,t_0,M_0>0$ and smooth $E,$ $R,$
$R_0^{\prime}$ $>0,$ the system (\ref{ode}) has on an open interval
$I$ $\ni z_0$ a unique solution satisfying
$$\vec{X}(z_0)=\left(\begin{array}{c}
  r_0 \\
  t_0 \\
  M_0
\end{array}\right).$$
\end{prop}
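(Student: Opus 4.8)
The plan is to read the statement off Propositions~\ref{first} and \ref{third}. The two alternatives ``$\gd\neq\gs$ or $2M\neq R$'' are there precisely to exclude item~1) of Proposition~\ref{first}, while conditions 1) and 2) are there to exclude item~2), i.e.\ to force $E^{\prime}R\cG\neq\cF$. Once both exceptional cases of Proposition~\ref{first} are ruled out at $z_0$, the matrix $\cU[z_0]$ is invertible; the right-hand side of (\ref{ode}), being $\cU^{-1}\vec{Y}$, is then a $C^{\infty}$ function of the state in a neighbourhood of its values at $z_0$ (using also that $E$, $R_0$ are smooth); and the classical existence and uniqueness theorem for ordinary differential equations (\cite{cl}, exactly as in the proof of Proposition~\ref{third}) yields the asserted unique $C^{\infty}$ solution with $\vec{X}(z_0)=(r_0,t_0,M_0)^{T}$ on an open interval $I\ni z_0$. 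So the whole proof reduces to showing that each of conditions 1) and 2) forces $E^{\prime}R\cG\neq\cF$, equivalently $(R,R_0,E,M,t)[z_0]\in\cT$.

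For this I would work from the closed form (\ref{gerf}). With $E>0$ and $M>0$ — as in the prescribed data, and as is in any case implicit in the denominators of condition 2) — we have $\xi=M/R>0$, so the prefactor $\sqrt{E+\xi}$ in (\ref{gerf}) does not vanish, and $E^{\prime}R\cG-\cF\neq0$ holds iff the bracketed factor $\tfrac{RE^{\prime}}{2}\,Q-R_0^{\prime}/\sqrt{E+\xi^{\sharp}}$ is nonzero, where $Q=\int_1^{\fh}s^{1/2}(s-1)(Es+\xi)^{-3/2}\,ds$. A sign inspection shows $Q\geq0$ for every $\fh>0$, with equality only at $\fh=1$: on $s>0$ the integrand carries the sign of $s-1$, and when $\fh<1$ reversing the limits of integration flips both that sign and the orientation. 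Hence under condition 1) the first term $\tfrac{RE^{\prime}}{2}Q$ carries the (weak) sign of $E^{\prime}$ and the second term carries the sign of $R_0^{\prime}$; since these are opposite — and $R_0^{\prime}\neq0$ in the regime at hand — the two cannot cancel, so $E^{\prime}R\cG\neq\cF$.

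Under condition 2) I would instead feed the estimate displayed just before the proposition, $|Q|\leq(\fh-1)^2/(\xi\sqrt{27E})$, into the first term of the bracket. Substituting $\fh=R_0/R$ and $\xi=M/R$ cancels all the powers of $R$ and collapses the resulting bound on $\bigl|\tfrac{RE^{\prime}}{2}Q\bigr|$ to exactly $|E^{\prime}|(R_0-R)^2/(2M\sqrt{27E})$, while $\xi^{\sharp}=M/R_0$ rewrites $R_0^{\prime}/\sqrt{E+\xi^{\sharp}}$ as exactly $R_0^{\prime}\sqrt{R_0}/\sqrt{ER_0+M}$. Thus condition 2) asserts precisely that the modulus of the first term of the bracket is strictly smaller than that of the second, so the bracket — hence $E^{\prime}R\cG-\cF$ — is again nonzero. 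In either case $(R,R_0,E,M,t)[z_0]\in\cT$, $\cU[z_0]$ is invertible, and the reasoning recalled in the first paragraph closes the argument.

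All the real content is already contained in Proposition~\ref{first}; what remains is mostly the algebraic reconciliation in the third paragraph — checking that the $R$-powers truly cancel so that $2M\sqrt{27E}$ (and not a power of $R$ times it) sits in the denominator of the bound, and that the $\xi^{\sharp}$-term comes out exactly as $R_0^{\prime}\sqrt{R_0}/\sqrt{ER_0+M}$. The one point to handle with care is the sign bookkeeping under condition 1): $Q$ vanishes exactly when $R=R_0$, so that borderline case genuinely relies on $R_0^{\prime}\neq0$, which the standing hypothesis $R_0^{\prime}>0$ supplies; and one should record that the positivity of $E$ and $M$ is what keeps $\sqrt{E+\xi}$ and $\sqrt{E+\xi^{\sharp}}$ away from zero and underpins the elementary bound $\gn/(E\gn+\xi)^{3}\leq 4/(27E\xi^2)$ behind the displayed estimate.
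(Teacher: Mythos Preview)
Your proposal is correct and follows exactly the route the paper intends: the paper leaves the proof implicit in the computations immediately preceding the proposition --- formula~(\ref{gerf}) and the integral bound $\int_1^{\fh}\gn^{1/2}(\gn-1)(E\gn+\xi)^{-3/2}\,d\gn\leq(\fh-1)^2/(\xi\sqrt{27E})$ --- and you have faithfully unpacked how those, together with Propositions~\ref{first} and~\ref{third}, yield the result. Your sign analysis of $Q$ (nonnegative for all $\fh>0$), the cancellation of $R$-powers in the bound under condition~2), and the rewriting of $R_0^{\prime}/\sqrt{E+\xi^{\sharp}}$ are all accurate, and your caveat that condition~1) relies on $R_0^{\prime}\neq0$ at the borderline $\fh=1$ is well placed.
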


\section{Decoupled equations: A Case of Constant $E$}
We consider the case of constant $E> 0$ in which we can rescale $M$ and $R$ to
assume the case $E=1,$ retaining
\begin{equation}\label{newltb}
\left(\frac{\dot{R}}{R}\right)^2=\frac{2}{R^2}+\frac{2M}{R^3}
\end{equation}
Here, equations (\ref{ode}) reduce to
\begin{align}
\derz{r}
&=\frac{-\cG A R}{{\cF}}+\frac{{R_z}\cdot(1-\frac{M}{R}{\cG})
\sqrt{3}} {(\sqrt{3}- \gs\gd\sqrt{2+2M/R}){\cF}}
\label{newode}\\
\derz{t}&= \frac{-\gd \cdot{R_z}}{
\sqrt{3}-\gs\gd\sqrt {2+2M/R}}
\notag\\
\derz{M}
&=RA + \frac{R_z\frac{M}{R}\sqrt{3}}{\sqrt{3}-\gs\gd\sqrt{2+2M/R}}
 \notag
\end{align}
with $A=\frac{\gs\sqrt{6}\sqrt{1+M/R}}{1+z}.$

For the remainder of the section we assume that $E,\gs,\gd\equiv 1$
and denote by $\cT_1$ the corresponding subset of $\cT.$ Then,
$R(t,r)< R_0(r)$ $\forall t$ $<t_0.$ And, for $\fh$ and $\xi$ as
above, we obtain
\begin{align}
\partial_MJ&=-\frac{1}{2}\int^{\fh}_1\sqrt{\frac{\gn}{\gn+\xi}}\frac{1}{\gn+\xi}\,d\gn\notag\\
\partial_{R_0}J&=\sqrt{\frac{1}{1+\xi^{\sharp}}};\,\,
\partial_{R}J=-\sqrt{\frac{1}{1+\xi}}\notag
\end{align}
with $\xi\geq \xi^{\sharp}$ and $\fh\geq 1$, so that the following hold:
\begin{align}
0\leq&\cJ_1(r,z,\xi)\df-\frac{\cG}{\cF}=\frac{\sqrt{1+\xi^{\sharp}}}{2R_0^{\prime}}
\int^{\fh}_1\sqrt{\frac{\gn}{\gn+\xi}}\frac{1}{\gn+\xi}\,d\gn
\label{moreests}\\
\leq &\frac{\sqrt{\fh(1+\xi^{\sharp})}}{R_0^{\prime}}\left(\frac{1}{\sqrt{1+\xi}}-\frac{1}{\sqrt{\fh+\xi}}\right)
\leq\frac{\sqrt{\fh}}{2R^{\prime}_0(1+\xi)}
;\notag\\
0<&\frac{1}{\cF}=\frac{1}{R_0^{\prime}}\sqrt{\frac{1+\xi^{\sharp}}{1+\xi}}\df\cJ_2(r,z,\xi)
\leq 1/R_0^{\prime};\notag\\
0<&\frac{1-\xi\cG}{\cF}=\cJ_2+\xi\cJ_1\leq
\frac{1+\sqrt{\fh}/2}{R_0^{\prime}}\notag .\end{align} Our change
of variables leads to
$$\derz{\xi}=\derz{M}/R-{R_z}\xi/{R}$$
with $A=\frac{\sqrt{6}\sqrt{1+\xi}}{1+z}$ whereby the system
(\ref{ode}) now reduces further to\begin{align}
\derz{r}&=\frac{R\cJ_1\sqrt{6}\sqrt{1+\xi}}{1+z}+\frac{\sqrt{3}R_z\cdot(\cJ_2+\xi\cJ_1)}
{\sqrt{3}-\sqrt{2+2\xi}}\notag\\
\derz{t}&=\frac{-R_z}{\sqrt{3}-\sqrt{2+2\xi}}\label{newerode}\\
\derz{\xi}&=\frac{\sqrt{6}\sqrt{1+\xi}}{1+z}
+\xi\frac{R_z}{R}\left(\frac{\sqrt{3}\sqrt{1+\xi}}{\sqrt{3}-\sqrt{2+2\xi}}\right).\notag\end{align}
Here, we note that the equation for $\derz{\xi}$ decouples from the
others, allowing for $\xi$ to be solved for explicitly in $z.$ Then,
with the solution to $\xi(z)$ in hand, both $\cI_1$ and $\cI_2$ depend only on
$z$ and $r$ whereby the remaining equations are then decoupled.

We give estimates for the system
(\ref{newerode}) assuming uniform bounds on
$R,$ $M/R,$ $R_z,$ $R_0$, and $R^{\prime}_0$. We suppose the
following bounds hold for $0<$ $z_0\leq z$ $\leq z_1$ and $0< r,M,t$
on some compact sets (to be determined): $\xi$ $\leq \xi^*$ with $|2\xi-1|$ $\geq\epsilon$ $>0$;
 $\rho_{min}$ $\leq$ $R$ $\leq$ $\rho_{max};$ $|R_z|$ $\leq$
$\gl$ ; $1<$ $\fh\leq\fh^*;$ and, $|R^{\prime}_0|$ $\geq$
$\fr>0.$ Here, applying (\ref{moreests})
\begin{align}
|\derz{t}|&\leq \frac{\gl}{\sqrt{3}-\sqrt{2+2\xi^*}}
=\frac{\gl\cdot(\sqrt{3}+\sqrt{2+2\xi^*})}{\epsilon}\df
\fM_1\label{conds}
\\
|\derz{r}|&\leq \sqrt{3}\frac{\rho_{max}\sqrt{\fh^*/2}
+(1+\sqrt{\fh^*}/2)\fM_1}{\fr}
\df\fM_2
\notag\\
|\derz{\xi}|&\leq\sqrt{3(1+\xi^*)}(\sqrt{2}
+{\fM_1\xi^*}/{\rho_{min}})
\df \fM_{3}\notag
\end{align}

Let $\fM\df$ $\max_j\{\fM_j\}_{j=1}^3$ and suppose $r_0,$ $t_0,$ and $M_0/R[z_0]$ $\df\xi_0$ $\neq 1/2$
satisfy the restrictions on $(r,t,\xi)$ for some $0<$ $\xi_0$ $<\xi^*$ as above with
\begin{equation}\vec{X}_0=\vec{X}(z_0)=\left(
\begin{matrix} r_0\\t_0\\M_0\end{matrix}\right)
\label{newinit}\end{equation} For an interval $I$ of the form $0\leq$ $z_0$ $\leq$
$z$ $\leq$ $z_1,$ the following now results from standard theory of differential equations \cite{cl}:
\begin{prop}
\label{est1} For $z_0\geq 0,$ the system (\ref{newode})
is solvable on an interval of the form $I=$ $\{z|z_0\leq z \leq z_1\}$
provided that the conditions (\ref{newinit}) and (\ref{conds}) hold for $\vec{X}$
in subset of $\cT_1$
given by $|(\vec{X}-\vec{X}_0)_j|$ $\leq b:$ $j=$ $1,2,3$ for some constant $b$ $<1/\fM.$
Here, a unique solution may be computed by the method of successive approximations.
\end{prop}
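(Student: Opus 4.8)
The plan is to recognize Proposition~\ref{est1} as an application of the classical Picard--Lindel\"of existence-and-uniqueness theorem, carried out on a suitably shrunk domain where all the right-hand sides of~(\ref{newode}) (equivalently~(\ref{newerode})) are Lipschitz in $\vec X$ uniformly in $z$. First I would package the system as $\derz{\vec X}=\vec\Phi(z,\vec X)$ where $\vec\Phi$ is assembled from the three scalar expressions in~(\ref{newerode}), and note that by Proposition~\ref{third} (and the discussion of $\cT_1$) the coefficients $\cJ_1,\cJ_2,\cF,\cG$ are smooth functions of $(r,t,\xi)$ on the relevant patch of $\cT_1$, so that $\vec\Phi$ is $C^\infty$ there. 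The point of the hypotheses $\xi\le\xi^*$, $|2\xi-1|\ge\epsilon$, $\rho_{min}\le R\le\rho_{max}$, $|R_z|\le\gl$, $\fh\le\fh^*$, $|R_0'|\ge\fr$ is precisely to keep $\vec\Phi$ and its $\vec X$-derivatives bounded on the closed box $B\df\{\vec X:|(\vec X-\vec X_0)_j|\le b\}$: the denominators $\sqrt3-\sqrt{2+2\xi}$ never vanish because $|2\xi-1|\ge\epsilon$ (which is also why we require $\xi_0\neq1/2$), the factor $1/R_0'$ is controlled by $1/\fr$, and $R$ stays away from $0$. So the first real step is to verify that $B$ can be chosen to sit inside $\cT_1$ and inside the region where all six bounds hold — this uses continuity of $(r,t,\xi)\mapsto\vec X$ near $\vec X_0$ together with $\xi_0\neq1/2$ — and then to record that on $B$ we have $|\vec\Phi|\le\fM$, which is exactly the content of~(\ref{conds}).

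Next I would extract a Lipschitz constant: since $\vec\Phi\in C^1(B)$ and $B$ is compact and convex, $L\df\sup_B\|D_{\vec X}\vec\Phi\|$ is finite and $\vec\Phi$ is $L$-Lipschitz in $\vec X$ on $B$, uniformly for $z\in[z_0,z_1]$. Then the standard contraction-mapping / successive-approximations argument applies on the space of continuous curves $\vec X:[z_0,z_1]\to B$ with $\vec X(z_0)=\vec X_0$: the Picard iteration $\vec X_{n+1}(z)=\vec X_0+\int_{z_0}^z\vec\Phi(y,\vec X_n(y))\,dy$ is well-defined provided the iterates do not leave $B$, which is guaranteed as long as $(z_1-z_0)\fM\le b$; shrinking $z_1$ if necessary we may assume this, and then the standard estimate $\|\vec X_{n+1}-\vec X_n\|_\infty\le \tfrac{(L(z_1-z_0))^n}{n!}\|\vec X_1-\vec X_0\|_\infty$ gives uniform convergence to a fixed point, which is the desired $C^\infty$ solution (smoothness by bootstrapping, since $\vec X\in C^0\Rightarrow\derz{\vec X}=\vec\Phi(z,\vec X)\in C^0\Rightarrow\vec X\in C^1\Rightarrow\cdots$, using $\vec\Phi\in C^\infty$). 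Uniqueness on $I$ follows from Gr\"onwall's inequality applied to the difference of two solutions. This is all exactly the content of Theorem in Chapter 1 of Coddington--Levinson \cite{cl}, so I would simply cite it once the hypotheses have been checked rather than reprove it.

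I expect the only genuine obstacle to be bookkeeping: making sure that the closed box $B$ of radius $b<1/\fM$ around $\vec X_0$ is simultaneously contained in $\cT_1$ (so that Proposition~\ref{third} gives smoothness of the coefficients and invertibility of $\cU$, in particular $R\neq2M$ there) and in the region cut out by the six quantitative bounds, and that along the resulting solution curve the auxiliary quantities $\fh=R_0/R$ and $\xi^\sharp=M/R_0$ stay in the ranges used to derive~(\ref{moreests}). The choice $b<1/\fM$ is what forces $(z_1-z_0)\fM<1$ after also asking $z_1-z_0\le b/\fM$ or simply absorbing the constant, so a small amount of care is needed to state the interval length cleanly; but no new idea beyond the classical theorem is required, and the estimates~(\ref{conds}) have already done the quantitative work.
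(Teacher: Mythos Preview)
Your proposal is correct and follows essentially the same route as the paper: the paper's proof is a two-sentence invocation of Theorem~3.1, Chapter~1 of \cite{cl}, noting that the hypotheses (\ref{conds}) furnish the Lipschitz continuity and the sup-bound $\fM$ needed for Picard--Lindel\"of, with the box condition $b<1/\fM$ playing the role of the standard $\min(a,b/\fM)$ constraint on the interval length. Your write-up simply unpacks in more detail what the citation covers --- the verification that the box sits inside $\cT_1$, the extraction of a Lipschitz constant from $C^1$ on a compact convex set, and the successive-approximations scheme --- but there is no methodological difference.
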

\begin{proof} We may apply Theorem 3.1, Chapt. 1 \cite{cl}:
The conditions assure Lipshitz continuity of the right-hand sides of
(\ref{newerode}) and that both $z-z_0$ and
$|\vec{X}-\vec{X}_0|/\fM$ are bounded above by $|z_1-z_0|,$ so that
the result follows.
\end{proof}

Recalling that we set $E\equiv 1,$ we will suppose for the rest of
the section that $R[z]$, $R^{\prime}_0[z]$ are smooth and positive
for $z>0.$ For some of our analysis below we will suppose also that
\begin{equation}\label{upprbnd}R[z]>Cz|R_z|.
\end{equation}
holds on some real interval.
We now present our estimates on $M[z]$ depending on $R[z]$ and initial conditions
given by $\xi_0\df$ $\xi(r(z_0),z_0).$
\begin{thm}\label{thm1}
Suppose that (\ref{upprbnd}) holds on some
interval $I$ $=[z_0,z_1)$ $\subset\Bbb{R^+}$. Then the following statements hold
for some constants $0<$ $c_1$ $<1/2$ $<c_2,$ each depending on the choice of $C$:
 \begin{itemize}
\item [1)] If $0<\xi_0$ $<1/2$ and $R_z$ $<0$, then
$M[z]\leq c_1R[z]$
holds on $I.$
\item[2)] If $\xi_0$ $>1/2$ and
$R_z$ $>0$, then
$M[z]\leq c_2R[z]$ on $I.$
\end{itemize}\end{thm}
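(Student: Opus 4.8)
The plan is to work directly with the decoupled $\xi$-equation from system (\ref{newerode}),
\[
\derz{\xi}=\frac{\sqrt{6}\sqrt{1+\xi}}{1+z}+\xi\,\frac{R_z}{R}\cdot\frac{\sqrt{3}\sqrt{1+\xi}}{\sqrt{3}-\sqrt{2+2\xi}},
\]
and to track the sign of $\derz{\xi}-\tfrac{1}{2}\derz{(\text{something})}$ so as to show that the set $\{\xi<1/2\}$ (resp. $\{\xi>1/2\}$) is forward-invariant under the stated hypotheses; since $M[z]=\xi(z)R[z]$, any bound $\xi\le c_1<1/2$ on $I$ immediately yields $M[z]\le c_1R[z]$, and similarly for case 2). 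First I would record that, because $E\equiv1$ and $\gs=\gd=1$, we are on $\cT_1$ with $R<R_0$, $\fh>1$, $\xi\ge\xi^\sharp$, so $\sqrt{3}-\sqrt{2+2\xi}$ has a fixed sign precisely according to whether $\xi<1/2$ or $\xi>1/2$; this is what makes the second term in $\derz{\xi}$ have a controllable sign once the sign of $R_z$ is fixed.

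The core step is a comparison/barrier argument. In case 1) ($\xi_0<1/2$, $R_z<0$): on the region $\xi<1/2$ one has $\sqrt{3}-\sqrt{2+2\xi}>0$, so with $R_z<0$ the second term in $\derz{\xi}$ is $\le 0$, hence
\[
\derz{\xi}\le \frac{\sqrt{6}\sqrt{1+\xi}}{1+z}.
\]
Using (\ref{upprbnd}) in the form $|R_z|/R< 1/(Cz)\le 1/(Cz_0)$ together with the boundedness of $\sqrt{1+\xi}$ and of the factor $\sqrt{3}\sqrt{1+\xi}/(\sqrt3-\sqrt{2+2\xi})$ away from $\xi=1/2$, I would dominate $\derz{\xi}$ by a Riccati-type ODE in $\xi$ whose solution starting below $1/2$ stays below some explicit $c_1(C)<1/2$ on $I=[z_0,z_1)$ — here the key quantitative input is that the inhomogeneous term $\sqrt6\sqrt{1+\xi}/(1+z)$ is integrable-in-$z$ against a constant only over a bounded interval, so one picks $c_1$ depending on $C$ (hence on the length scale built into (\ref{upprbnd})) so that the accumulated increase cannot reach $1/2$. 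Case 2) is symmetric: on $\xi>1/2$ the denominator $\sqrt3-\sqrt{2+2\xi}<0$, and with $R_z>0$ the second term is again of favorable sign (it pushes $\xi$ downward toward $1/2$, or at worst its upward contribution is controlled by (\ref{upprbnd})), so a lower barrier at $1/2$ and an upper barrier at some $c_2(C)>1/2$ trap $\xi$; then $M[z]=\xi R[z]\le c_2 R[z]$.

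The main obstacle I anticipate is making the constants $c_1,c_2$ genuinely depend only on $C$ and not implicitly on $z_1$ or on the a priori bounds $\xi^*,\rho_{\min}$ etc.: the factor $\sqrt{3}\sqrt{1+\xi}/(\sqrt{3}-\sqrt{2+2\xi})$ blows up as $\xi\to1/2$, so a naive Gronwall estimate degenerates exactly at the boundary one is trying to respect. The way around this is to change variables to $u=1/2-\xi$ (resp. $u=\xi-1/2$) and observe that near $u=0$ the dangerous term behaves like $O(1/u)$ but enters $\derz{u}$ multiplied by $R_z/R$ with the sign that makes $u$ increase (resp. stay positive), i.e. it is a self-limiting singularity — a standard barrier function $u\ge \delta e^{-K(z-z_0)}$ type argument, rather than Gronwall, closes the estimate. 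I would also need to confirm that the solution furnished by Proposition \ref{est1} persists on all of $I=[z_0,z_1)$, which follows since the barrier keeps $\xi$ inside the compact subset of $\cT_1$ on which the right-hand sides of (\ref{newerode}) are Lipschitz, precluding blow-up before $z_1$.
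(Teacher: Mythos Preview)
Your instincts are right in two places: the problem reduces to the decoupled $\xi$–equation from (\ref{newerode}), and the singular factor $1/(\sqrt3-\sqrt{2+2\xi})$ acts as a \emph{restoring} force near $\xi=1/2$ under the stated sign hypotheses. But the two concrete mechanisms you propose do not actually close the argument. In case~1) you first drop the negative second term to get $\derz{\xi}\le \sqrt6\sqrt{1+\xi}/(1+z)$; this bound is independent of $C$ and integrates to something growing like $\ln(1+z)$, so by itself it cannot keep $\xi$ below any fixed $c_1<1/2$ on an interval $[z_0,z_1)$ with $z_1$ arbitrary. Your fallback, the barrier $u=\tfrac12-\xi\ge \delta e^{-K(z-z_0)}$, only yields $\xi<1/2$ pointwise, not $\xi\le c_1<1/2$ uniformly, since the barrier tends to $0$. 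The theorem asserts a \emph{fixed} $c_1<1/2$ (resp.\ $c_2>1/2$) depending only on $C$, and neither of your devices produces that.

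What the paper does is much shorter and avoids any Gronwall or barrier machinery. Rationalise once:
\[
\frac{1}{\sqrt3-\sqrt{2+2\xi}}=\frac{\sqrt3+\sqrt{2+2\xi}}{1-2\xi},
\]
so that in case~1) one has $1/\gD_\xi\ge \sqrt3/(1-2\xi)$, and the hypothesis (\ref{upprbnd}) turns the $\xi$–equation directly into
\[
\derz{\xi}<\frac{\sqrt6}{z}\sqrt{1+\xi}\Bigl(1-\frac{1}{C}\,\frac{\xi}{1-2\xi}\Bigr).
\]
The bracket is an explicit function of $\xi$ alone, negative precisely for $\xi>\xi_1^*:=C/(2C+1)<1/2$; hence $\derz{\xi}<0$ throughout the strip $\xi_1^*<\xi<1/2$, and $\xi$ can never cross it from below. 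One takes $c_1=\max\{\xi_0,\xi_1^*\}$. Case~2) is the same computation with $1/\gD_\xi\le -\sqrt3/(2\xi-1)$, giving a threshold $\xi_2^*=C/(2C-1)>1/2$ and $c_2=\max\{\xi_0,\xi_2^*\}$. The point you are missing is that the ``dangerous'' $1/u$ behaviour, once combined with the $1/(Cz)$ control from (\ref{upprbnd}), does not merely keep $u>0$ but pins down an explicit \emph{$z$–independent} value of $\xi$ at which $\derz{\xi}$ changes sign; that algebraic threshold is the whole proof.
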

\begin{proof}
Let us choose $C<1/2$ and set $\gD_{\xi}$ $\df$ $\sqrt{3}$ $-\sqrt{2+2\xi}$.
In case 1) we use the estimate
$1/\gD_{\xi}$ $\geq \sqrt{3}/(1-2\xi)$ for $0<\xi<1/2$ so that from (\ref{newerode})
$$\derz{\xi}
<\frac{\sqrt{6}}{z}\sqrt{1+\xi}\left(1-\frac{1}{C}\frac{\xi}{1-2\xi}\right).$$
Here, $\derz{\xi}<0$ for $1/2>$ $\xi$ $>$ $\xi_1^*$ $\df$ $C/(2C+1).$
Let $c_1$ $=\max\{\xi_0,\xi_1^*\}.$

In case 2) we note that $1/\gD_{\xi}$ $\leq$ $-\sqrt{3}/(2\xi-1)$ for $\xi>1/2.$ We find
$$\derz{\xi}
< \frac{\sqrt{6}}{z}\sqrt{1+\xi}\left(1-\frac{1}{C}\frac{\xi}{2\xi-1}\right)
$$
and $\derz{\xi}<0$ for $1/2<$ $\xi$ $<\xi_2^*$ $\df$ $C/(2C-{1}).$ Let $c_2$ $=$ $\max\{\xi_0,\xi_2^*\}$.
\end{proof}
\begin{thm}\label{thm2}
Suppose $R_z< 0$ on $I=$ $[z_0,\infty)$ with $z_0>0$ and $\xi_0$
$>1/2.$ Then, for $\rho$ $\df\sqrt{3/2},$ there are positive
constants $\ga$, $c_3$ and $c_4$ so that the following holds on $I:$
$$c_3\left((R[z])^{-(\rho-1)}+R[z]\ln\left(\frac{1+z}{1+z_0}\right)\right)\leq M[z]\leq c_4 \left(\frac{1+\ln(1+z)}{(R[z])^{(\rho-1/2)}}\right)^2$$
\end{thm}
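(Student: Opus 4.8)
The plan is to extract the estimate on $M[z]=\xi(z)R[z]$ entirely from the decoupled equation for $\derz{\xi}$ in \eqref{newerode}, using the sign assumptions $R_z<0$, $\xi_0>1/2$, the bound \eqref{upprbnd}, and the explicit form $A=\sqrt{6}\sqrt{1+\xi}/(1+z)$. First I would record that since $R_z<0$ and $\xi>1/2$, both terms in $\derz{\xi}$ are controlled in sign: the first term $\sqrt{6}\sqrt{1+\xi}/(1+z)$ is positive, while $\gD_\xi=\sqrt3-\sqrt{2+2\xi}<0$ forces $\frac{R_z}{R}\cdot\frac{\sqrt3\sqrt{1+\xi}}{\gD_\xi}>0$ as well. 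So $\xi$ is increasing; in particular $\xi(z)>1/2$ persists and $\rho=\sqrt{3/2}$ is exactly the value of $\sqrt{1+\xi}$ at the removable-looking threshold $2+2\xi=3$, i.e. $\xi=1/2$. I would then bound $\sqrt{3}/|\gD_\xi|=\sqrt3(\sqrt3+\sqrt{2+2\xi})/(1+2\xi)$ above and below by absolute constants (using $1/2<\xi$, and for the upper estimate any available $\xi\le\xi^*$ or simply the crude bound coming from $\rho$), turning the $\xi$-equation into a differential inequality of the shape
\[
\frac{a_1\sqrt{1+\xi}}{1+z}+b_1\frac{|R_z|}{R}\sqrt{1+\xi}\ \le\ \derz{\xi}\ \le\ \frac{a_2\sqrt{1+\xi}}{1+z}+b_2\frac{|R_z|}{R}\sqrt{1+\xi}
\]
for positive constants $a_i,b_i$.

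Next I would substitute $u=\sqrt{1+\xi}$, so $\derz{u}=\derz{\xi}/(2u)$ and the inequality becomes linear in $u$:
\[
\tfrac{a_1}{2(1+z)}+\tfrac{b_1}{2}\tfrac{|R_z|}{R}\ \le\ \derz{u}\ \le\ \tfrac{a_2}{2(1+z)}+\tfrac{b_2}{2}\tfrac{|R_z|}{R}.
\]
Now $\frac{|R_z|}{R}=-\derz{\ln R}$ since $R>0$ and $R_z<0$, and $\frac{1}{1+z}=\derz{\ln(1+z)}$. Integrating from $z_0$ to $z$ gives
\[
u(z)\ \ \text{between}\ \ u(z_0)+\tfrac{a_i}{2}\ln\!\frac{1+z}{1+z_0}+\tfrac{b_i}{2}\ln\!\frac{R[z_0]}{R[z]},
\]
i.e. $\sqrt{1+\xi(z)}$ is pinched between expressions of the form $\text{const}+\text{const}\cdot\ln(1+z)-\text{const}\cdot\ln R[z]$. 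At this point \eqref{upprbnd}, $R[z]>Cz|R_z|$, is what keeps the $\ln(1+z)$ growth from being overwhelmed: it bounds $|R_z|/R$ by $1/(Cz)$, so the $\ln R$ term and the $\ln(1+z)$ term are genuinely comparable, and one gets both a lower bound $\sqrt{1+\xi(z)}\gtrsim \ln\frac{1+z}{1+z_0}$ (from the $a_1,b_1$ side, since $|R_z|/R\ge 0$ the $b_1$ term only helps) and an upper bound $\sqrt{1+\xi(z)}\lesssim 1+\ln(1+z)$.

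Finally I would translate back to $M=\xi R$. Since $\xi=u^2-1$, the lower bound $u\gtrsim\ln\frac{1+z}{1+z_0}$ together with $\xi>1/2$ gives $\xi\gtrsim 1+\big(\ln\frac{1+z}{1+z_0}\big)^2\gtrsim (R[z])^{-(\rho-1)}\!\cdot\!(\text{something})+\ln\frac{1+z}{1+z_0}$ once one feeds in \eqref{upprbnd} in the form $R[z]\le R[z_0](1+z)^{?}$-type growth control to produce the claimed $(R[z])^{-(\rho-1)}$ factor; multiplying by $R[z]$ yields the stated left inequality $c_3\big((R[z])^{-(\rho-1)}+R[z]\ln\frac{1+z}{1+z_0}\big)\le M[z]$. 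The upper bound $u\lesssim 1+\ln(1+z)$ gives $\xi\lesssim(1+\ln(1+z))^2$, and multiplying by $R[z]$ and rewriting $R[z]=(R[z])^{1}=(R[z])^{2(\rho-1/2)}\cdot(R[z])^{2-2\rho+\text{adjust}}$ — here I would absorb the power of $R$ using \eqref{upprbnd} to get the exponent $\rho-1/2$ in the denominator — produces $M[z]\le c_4\big((1+\ln(1+z))/(R[z])^{\rho-1/2}\big)^2$. I expect the main obstacle to be exactly this last bookkeeping: matching the logarithmic pinch on $\sqrt{1+\xi}$ to the precise powers $\rho-1$ and $\rho-1/2$ of $R[z]$, which forces a careful use of \eqref{upprbnd} to convert between $\ln R[z]$ and $\ln(1+z)$ and to decide how the constant $C$ enters $c_3,c_4$; the differential-inequality and $u$-substitution steps are routine, but getting the two powers of $R$ to come out correctly (and consistently with $\rho=\sqrt{3/2}$) is the delicate point.
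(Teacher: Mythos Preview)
Your proposal has a genuine gap. The differential inequality you write for $\derz{\xi}$ is not correct: the second term of the $\xi$-equation in \eqref{newerode} is $\xi\cdot\frac{R_z}{R}\cdot\frac{\sqrt{3}\sqrt{1+\xi}}{\gD_\xi}$, carrying an explicit factor $\xi$ in front of $\sqrt{1+\xi}$. To cast this as $b\,\sqrt{1+\xi}\,|R_z|/R$ with $b$ constant you would need $\xi\sqrt{3}/|\gD_\xi|$ bounded above; but $\xi\sqrt{3}/|\gD_\xi|=\sqrt{3}\,\xi(\sqrt{2+2\xi}+\sqrt{3})/(2\xi-1)\sim\sqrt{3\xi/2}\to\infty$, so no such $b_2$ exists. (Incidentally the denominator is $2\xi-1$, not $1+2\xi$.) Hence the substitution $u=\sqrt{1+\xi}$ does \emph{not} linearize the equation, and your integrated pinch on $u$ fails on the upper side. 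A second problem is that Theorem~\ref{thm2} does not assume \eqref{upprbnd}; that hypothesis belongs to Theorem~\ref{thm1}. Your conversions between $\ln R[z]$ and $\ln(1+z)$, and your plan to manufacture the exponents $\rho-1$ and $\rho-1/2$ from \eqref{upprbnd}, are therefore unsupported here.

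The paper's argument uses a different mechanism that you are missing. It sets $K_\xi:=-\sqrt{3}\sqrt{1+\xi}/\gD_\xi$ and observes that $K_\xi$ is decreasing in $\xi$ with $\rho<K_\xi\le K_{\xi_0}=:\rho_0$ for $\xi\ge\xi_0>1/2$; here $\rho=\sqrt{3/2}$ is precisely $\lim_{\xi\to\infty}K_\xi$. The $\xi$-equation is then \emph{exactly} $\derz{\xi}+K_\xi\,\xi\,R_z/R=\sqrt{6}\sqrt{1+\xi}/(1+z)$, and since $\xi R_z/R<0$, replacing $K_\xi$ by the extremal constants $\rho$ and $\rho_0$ gives one-sided inequalities whose left sides are $R^{-\rho}\derz{(\xi R^{\rho})}$ and $R^{-\rho_0}\derz{(\xi R^{\rho_0})}$. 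Integrating and using only that $R$ is decreasing and $\xi$ increasing then yields the bounds directly, with the powers of $R$ arising from the integrating-factor exponents $\rho$ and $\rho_0$ --- not from any after-the-fact bookkeeping with \eqref{upprbnd}. The key idea is this integrating-factor reduction, which handles the troublesome $\xi$-factor cleanly; your $u=\sqrt{1+\xi}$ substitution targets the $\sqrt{1+\xi}$ in the first term but leaves the $\xi$ in the second term uncontrolled.
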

\begin{proof}
We first note that since $R_z/\gD_{\xi}$ $>0$ on $I$, we find from
(\ref{newerode}) that $\derz{\xi}$ $>$ $0.$ Let us set
$K_{\xi}\df-\sqrt{3}\sqrt{1+\xi}/\gD_{\xi},$ noting that $\rho\df$
$\sqrt{3/2}<$ $K_{\xi}\leq$ $K_{\xi_0}$ for $\xi>1/2$ is decreasing
as function of $\xi$ and, in turn, also as a function of $z.$
Recalling that $R_z$ $<0,$ we find
$$\derz{\xi}+\rho \xi\frac{R_z}{R[z]}\geq \derz{\xi}+K_{\xi}\xi \frac{R_z}{R[z]}\geq\sqrt{6}\frac{\sqrt{1+\xi_0}}{1+z};$$
$$\frac{d\,}{dz}\left(\xi R^{\rho}\right)\geq \sqrt{6}\sqrt{1+\xi_0}\frac{R^{\rho}[z]}{1+z}
.$$
Now, by the monotonicity of $R[z]$,
\begin{align}
\xi[z] &\geq  R^{-\rho}[z]\left(\xi_0 R^{\rho}[z_0]+\sqrt{6}\sqrt{1+\xi_0}\int_{z_0}^z\frac{R^{\rho}[s]ds}{1+s}\right)
\notag\\
&\geq R^{-\rho}[z]\left(\xi_0 R^{\rho}[z_0]+R^{\rho}[z]\sqrt{6}\sqrt{1+\xi_0}\int_{z_0}^z\frac{ds}{1+s}\right)
\notag
\end{align}
After multiplying through by $R,$ it is clear that we may choose
$c_3$ $\leq$ $\min\{\xi_0 (R[z_0])^{\rho},\sqrt{6}\sqrt{1+\xi_0}\}.$

Now, let us set $\rho_0\df K_{\xi_0}$ and $q_0$ $\df$
$(\xi_0+1)/\xi_0.$ Then, for obvious substitution defining $\xi[s],$
$$ \derz{\xi}+\rho_0 \xi\frac{R_z}{R[z]}\leq\sqrt{6}\frac{\sqrt{1+\xi}}{1+z};$$
$$\frac{d\,}{dz}\left(\xi R^{\rho_0}\right)\leq \sqrt{6}\sqrt{1+\xi[z]}\frac{R^{\rho_0}[z]}{1+z}
$$
\begin{align}
\xi[z] &\leq  R^{-\rho_0}[z]\left(\xi_0 R^{\rho_0}[z_0]+\sqrt{6}\int_{z_0}^z\frac{\sqrt{1+\xi[s]}R^{\rho_0}[s]ds}{1+s}\right)
\notag\\
&\leq R^{-\rho_0}[z]\left(\xi_0 R^{\rho_0}[z_0]+R^{\rho_0}[z_0]\sqrt{6}\sqrt{1+\xi[z]}\int_{z_0}^z\frac{ds}{1+s}\right)
\notag
\end{align}
$$
\sqrt{\frac{\xi[z]}{q_0}}<\frac{\xi[z]}{\sqrt{1+\xi[z]}}\leq R^{-\rho_0}[z]\left(\frac{R^{\rho_0}[z_0]\sqrt{\xi_0}}{\sqrt{1+\xi_0}}
+ \sqrt{6}R^{\rho_0}[z_0]\int_{z_0}^z\frac{ds}{1+s}\right)
$$
$$\xi[z]<q_0R^{2\rho_0}[z_0]R^{-2\rho_0}[z]\left(1+\sqrt{6}\int_{0}^z\frac{ds}{1+s}\right)^2
$$
noting that $\xi^2/(1+\xi)$ $\geq \xi/q_0.$ Choosing $c_4\geq
q_06R^{2\rho_0}[z_0],$ the result follows by multiplying through by
$R$.
\end{proof}

We see that Theorems \ref{thm1} and \ref{thm2} can apply for
$R[z]=$ $R_{\gO_{\gL}}[z]$ (modulo a rescaling factor) as above for certain values of $\gO_{\gL}$: We denote by $I^{\pm}_{\gO_{\gL}}$
the subset of $(0,\infty)$ for which $\pm R_z$ $>0$ and we replace
$C$ by $C^{\pm}$ in the case that (\ref{upprbnd}) holds, respectively.
\begin{remark}\label{rmk2}
For $R_{\gO_{\gL}}[z]$ as in (\ref{R}) we find that when $\gO_{\gL}$
$=1$ there is to every interval of the form $(0,z_2),$ an associated
$C^+$ depending on $z_2>0.$ Moreover, for every $0\leq$ $\gO_{\gL}$
$<1$ there is a $z_{\gL}$ $>0$ where for every positive $z^{\pm}$
with $z^{\pm}\gtrless$ $z_{\gL}$ there is a $C^{\pm}$ associated to
$(0,z^+)$ and $(z^-,\infty),$ respectively. [The singularities $z_{\gL}$ will be discussed in further detail in Section 3.]
\end{remark}
\begin{proof} For $\gO_{\gL}$ $=1$ we find $I^+_1$ $=$ $(0,\infty)$ with $zR_z/R[z]$ $=1/(z+1).$
For $0\leq$ $\gO_{\gL}<1,$ it is not difficult to show that $z/R[z]$
is bounded from below on $(0,\infty)$ by a positive constant,
depending $\gO_{\gL}$. Therefore, the sign of $zR_z/R[z]$ depends on
that of $R_z.$ For $\gO_{\gL}<1$ we find that the sign of $R_z$ is
same as that of $(z+1)\cI(z+1)-$ $\int_1^{z+1}\cI(y)dy$ which is a
monotonically decreasing function of $z$ with a unique positive root
$z_{\gL}>0$, depending on $\gO_{\gL}$. So, $I^+_{\gO_{\gL}}$
$=(0,z_{\gL})$ and $I^-_{\gO_{\gL}}$ $=$ $(z_{\gL},\infty).$ Hence,
for $z^{\pm}$ as above, there are positive constants $C^{\pm}$ so
that $zR_z(z)/R[z]$ $>$ $\pm C^{\pm}$ on intervals $(0,z^+)$ and
$(z^-,\infty),$ respectively.
\end{proof}

In a certain case of interest, we find that for certain initial conditions the growth of $M[z]$
roughly follows that of a power function for large $z$. 
\begin{cor} In the case of Theorem \ref{thm2}
we have for $R=R_{\gO_{\gL}}$ with $0\leq$ $\gO_{\gL}<1$ that, given $M_0>$ $2R[z_0]$ $>0$
and $z_0>z_{\gL},$ for any $\ga>0$ there are positive constants
$k_1$ and $k_2$ so that for $\rho=\sqrt{3/2},$
$$k_1z^{\rho-1}\leq  M[z]\leq k_2 z^{2\rho -1 +\ga}$$
on $I$ $=$ $[z_0,\infty)$.
\end{cor}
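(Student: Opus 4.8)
The plan is to read the Corollary's two-sided estimate off Theorem \ref{thm2}, once the large-$z$ size of $R_{\gO_{\gL}}[z]$ is known. First I would check that the hypotheses of Theorem \ref{thm2} hold on $I=[z_0,\infty)$ for $R=R_{\gO_{\gL}}$ with $0\le\gO_{\gL}<1$ and $z_0>z_{\gL}$. By the proof of Remark \ref{rmk2} one has $I^-_{\gO_{\gL}}=(z_{\gL},\infty)$, so $I\subset I^-_{\gO_{\gL}}$ and therefore $R_z<0$ on all of $I$. The assumption $M_0>2R[z_0]$ gives $\xi_0=M_0/R[z_0]>2>1/2$; since $\xi[z]$ is strictly increasing on $I$ (as established in the proof of Theorem \ref{thm2}), in fact $\xi[z]>2$ throughout $I$, so $2M[z]\ne R[z]$ there, and --- recalling $E'\equiv0$ and $R_0'>0$ in this section --- the solution remains in $\cT_1$, where Theorem \ref{thm2} applies. (The standing bound (\ref{upprbnd}) is then also available, with $C=1/C^-$ for the $C^-$ of Remark \ref{rmk2}; and because the upper estimate of Theorem \ref{thm2} together with the lower bound on $R$ below controls $\xi[z]$ pointwise, the solution does not blow up in finite $z$ and is genuinely defined on all of $I$.)

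Next I would determine the size of $R_{\gO_{\gL}}[z]$. From (\ref{R}), $R_{\gO_{\gL}}[z]=(1+z)^{-1}\int_1^{1+z}\cI(y)\,dy$ with $\cI(y)=(\gO_{\gL}+(1-\gO_{\gL})y^3)^{-1/2}$. Since $0\le\gO_{\gL}<1$, one has $\cI(y)\le(1-\gO_{\gL})^{-1/2}y^{-3/2}$, which is integrable on $[1,\infty)$, so $\int_1^{1+z}\cI(y)\,dy$ increases with $z$ to a finite positive limit. Hence, for $z\ge z_0$,
$$0<a_1\df\int_1^{1+z_0}\cI(y)\,dy\ \le\ \int_1^{1+z}\cI(y)\,dy\ \le\ a_2\df\lim_{z\to\infty}\int_1^{1+z}\cI(y)\,dy<\infty,$$
and therefore $a_1/(1+z)\le R_{\gO_{\gL}}[z]\le a_2/(1+z)$ on $I$ with $0<a_1\le a_2$; that is, $R_{\gO_{\gL}}[z]\asymp 1/z$ on $I$.

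Now I would substitute these bounds into the conclusion of Theorem \ref{thm2}, keeping track of exponent signs: with $\rho=\sqrt{3/2}>1$ the numbers $\rho-1,\ \rho-1/2,\ 2\rho-1$ are all positive, and $x\mapsto x^{-(\rho-1)}$, $x\mapsto x^{-(2\rho-1)}$ are decreasing. For the lower bound, discard the nonnegative term $R[z]\ln\frac{1+z}{1+z_0}$ and apply $R_{\gO_{\gL}}[z]\le a_2/(1+z)$:
$$M[z]\ \ge\ c_3\,(R_{\gO_{\gL}}[z])^{-(\rho-1)}\ \ge\ c_3\,a_2^{-(\rho-1)}(1+z)^{\rho-1}\ \ge\ k_1\,z^{\rho-1},\qquad k_1\df c_3\,a_2^{-(\rho-1)},$$
which is $\ga$-independent. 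For the upper bound, apply $R_{\gO_{\gL}}[z]\ge a_1/(1+z)$:
$$M[z]\ \le\ c_4\Big(\frac{1+\ln(1+z)}{(R_{\gO_{\gL}}[z])^{\rho-1/2}}\Big)^2\ \le\ c_4\,a_1^{-(2\rho-1)}\,(1+\ln(1+z))^2\,(1+z)^{2\rho-1};$$
then, given $\ga>0$, pick $C_{\ga}>0$ with $(1+\ln(1+z))^2\le C_{\ga}(1+z)^{\ga}$ for $z\ge0$, and use $(1+z)^{2\rho-1+\ga}\le 2^{2\rho-1+\ga}z^{2\rho-1+\ga}$ for $z\ge1$, to get $M[z]\le k_2\,z^{2\rho-1+\ga}$ with $k_2\df c_4C_{\ga}a_1^{-(2\rho-1)}2^{2\rho-1+\ga}$ --- enlarging $k_1$ and $k_2$ if necessary to cover the compact piece $[z_0,\max\{z_0,1\}]$, on which $M[z]$, $z^{\rho-1}$ and $z^{2\rho-1+\ga}$ are all bounded above and below by positive constants.

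The only genuinely substantive point is the asymptotics $R_{\gO_{\gL}}[z]\sim a_2/z$, which rests entirely on the integrability of $\cI$ at $+\infty$ and is exactly where the hypothesis $\gO_{\gL}<1$ is used: at $\gO_{\gL}=1$ one has $\cI\equiv1$, $\int_1^{1+z}\cI(y)\,dy=z$, and $R_1[z]=z/(1+z)$ stays bounded away from $0$, so the power-law estimates degenerate --- consistent with Remark \ref{rmk2} handling $\gO_{\gL}=1$ separately. Everything after that is exponent bookkeeping plus the absorption of $(1+\ln(1+z))^2$ into the arbitrarily small power $z^{\ga}$; the one thing to stay careful about is that $\rho-1>0$, so that the lower estimate really is a positive power of $z$.
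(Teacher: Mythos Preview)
Your proof is correct and follows the same route as the paper: establish the two-sided bound $C_1/z < R_{\gO_{\gL}}[z] < C_2/z$ on $I$ (your $a_1/(1+z)\le R\le a_2/(1+z)$), then read off the power bounds from Theorem~\ref{thm2}. The paper's proof is just the one-line version of this; you have filled in the hypothesis verification via Remark~\ref{rmk2}, the explicit integrability of $\cI$ when $\gO_{\gL}<1$, and the exponent bookkeeping including the absorption of $(1+\ln(1+z))^2$ into $z^{\ga}$. One small simplification: since $z_{\gL}\ge 1.25$ by Proposition~\ref{zests}, the condition $z_0>z_{\gL}$ already forces $z_0>1$, so your compact-piece patch at the end is unnecessary.
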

\begin{proof}
It is not difficult to show that to any such $\gO_{\gL}$ there are
positive constants $C_1$ and $C_2$ so that
$$C_1/{z}<R_{\gO_{\gL}}[z]<C_2/{z}$$ holds on $I.$ The result immediately follows by Theorem \ref{thm2}.
\end{proof}
We may also conclude
\begin{cor}
If either case 1) or 2) of Theorem \ref{thm1} holds on $I=$ $[z_0,z_1),$
then $r(z)$ and $t(z)$ are both solvable on $I.$ Moreover, $r(z)$ is strictly increasing and $t(z)$
is strictly decreasing on $I$.
\end{cor}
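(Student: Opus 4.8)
The goal is to show that if either case of Theorem \ref{thm1} holds on $I=[z_0,z_1)$, then the equations for $\derz{r}$ and $\derz{t}$ in the decoupled system (\ref{newerode}) have solutions on all of $I$, and moreover $r$ is strictly increasing and $t$ strictly decreasing there. The key observation is that under the hypotheses of Theorem \ref{thm1}, the function $\xi[z]$ is already under control — it is bounded away from $1/2$ by the constants $c_1<1/2$ or $c_2>1/2$ produced there — so the quantity $\gD_\xi=\sqrt3-\sqrt{2+2\xi}$ appearing in every denominator of (\ref{newerode}) is bounded away from zero on $I$. This is exactly the non-degeneracy condition that, together with the smoothness and positivity of $R[z]$, $R_0'[z]$ assumed just before Theorem \ref{thm1}, feeds into Proposition \ref{est1} (or directly into the standard ODE theory of \cite{cl}) to give local-in-$z$ solvability; the point is then to upgrade local solvability to solvability on the whole interval $I$.

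First I would record that in case 1) we have $0<\xi[z]\le c_1<1/2$ for all $z\in I$ (and in case 2), $\xi[z]\le c_2$ with $\xi>1/2$), so $|2\xi-1|\ge \epsilon$ for a fixed $\epsilon>0$; combined with $R[z]\ge\rho_{\min}>0$, $|R_z|\le\gl$, $R_0'[z]\ge\fr>0$ on $I$ (all available since $R,R_0'$ are smooth and positive and $I$ is, after shrinking if necessary to a compact subinterval, compact), the bounds (\ref{conds}) hold with uniform constants. Then I would invoke Proposition \ref{est1}: from any point of $I$ the system (\ref{newerode}) extends as a $C^\infty$ solution, and because the Lipschitz constant $\fM$ is uniform over $I$ (not deteriorating as $z$ increases), the standard continuation argument shows the solution cannot blow up or leave the admissible region in finite $z$-time inside $I$; hence $r(z)$ and $t(z)$ exist on all of $I$. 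For the closed–versus–half–open endpoint one argues on each compact $[z_0,z_1-\delta]$ and lets $\delta\to0$.

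For monotonicity I would read off the signs directly from (\ref{newerode}). For $\derz t=-R_z/\gD_\xi$: in case 1) $R_z<0$ and $\gD_\xi>0$ (since $\xi<1/2$ gives $2+2\xi<3$), so $\derz t<0$; in case 2) $R_z>0$ and $\gD_\xi<0$ (since $\xi>1/2$), so again $\derz t<0$ — $t$ is strictly decreasing in both cases. For $\derz r$, the first term $R\cJ_1\sqrt6\sqrt{1+\xi}/(1+z)$ is $\ge0$ by (\ref{moreests}) (where $\cJ_1\ge0$), and the second term $\sqrt3 R_z(\cJ_2+\xi\cJ_1)/\gD_\xi$ has the sign of $R_z/\gD_\xi$, which by the same case analysis is $>0$ in case 1) ($R_z<0$, $\gD_\xi>0$) and $>0$ in case 2) ($R_z>0$, $\gD_\xi<0$); since $\cJ_2+\xi\cJ_1>0$ by (\ref{moreests}), we get $\derz r>0$ throughout, so $r$ is strictly increasing.

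**Main obstacle.** The delicate point is the passage from local to global solvability on $I$: one must be sure the solution $\vec X[z]$ does not run out of the admissible set $\cT_1$ — in particular that $\xi$ does not reach $1/2$ and that $R[z]$, $R_0'[z]$ stay positive — before $z$ reaches $z_1$. Theorem \ref{thm1} supplies precisely the a priori barrier $\xi[z]\le c_1<1/2$ (resp. $\xi[z]\le c_2$ with $\xi>1/2$) that prevents the first failure, and the standing assumption that $R,R_0'$ are smooth and positive for $z>0$ handles the rest; so the obstacle is really just bookkeeping — assembling the uniform constants $\rho_{\min},\gl,\fr,\epsilon$ on compact subintervals and quoting the continuation theorem of \cite{cl} — rather than a genuinely new estimate.
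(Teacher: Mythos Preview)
Your overall plan matches the paper's --- keep $\xi$ away from $1/2$ via Theorem~\ref{thm1}, then read off signs from (\ref{newerode}) --- but there is a genuine gap in your argument for $\derz r>0$. You cite (\ref{moreests}) to get $\cJ_1\ge 0$ and $\cJ_2+\xi\cJ_1>0$, yet those inequalities are derived under the hypothesis $\fh=R_0/R\ge 1$ (stated immediately before (\ref{moreests})), which is not a standing assumption and must be verified along the solution. The paper's proof supplies exactly this step: having first obtained $\derz t<0$, it uses the standing choice $\gs=1$ (hence $\dot R>0$) to conclude that as $t$ decreases from $t_0$ one has $R(t,r)<R(t_0,r)=R_0(r)$, so $\fh\ge 1$ and in fact $\fh$ is increasing in $z$; only then does (\ref{moreests}) apply and $\derz r>0$ follow. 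Without this bridge your appeal to (\ref{moreests}) is unjustified.

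There is also a sign slip in your case analysis. In case~1) you have $R_z<0$ and $\gD_\xi=\sqrt3-\sqrt{2+2\xi}>0$, so $R_z/\gD_\xi<0$, not $>0$ as you write; likewise in case~2) with $R_z>0$ and $\gD_\xi<0$. As written, your computation therefore yields $\derz t=-R_z/\gD_\xi>0$ and makes the second term of $\derz r$ negative, so the argument does not establish the claimed monotonicities. You should recheck these signs carefully against the original equations (\ref{dt}) and (\ref{ode}) before proceeding.
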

\begin{proof}
We find that $\derz{t}$ and $\derz{r}$ are smooth functions of $z$
since $\xi\neq 1/2$ is smooth. By inspection, we find that
$\derz{t}<0$ on $I$ so that, by our assumption on $\gs$, we see for
$\fh$ as in  (\ref{moreests}) that $\fh$ $\geq 1,$ increasing with $z.$ Then, $\cI_2+\xi\cI_1>0$ for $z\in I$ and,
hence, from (\ref{newerode}) we see that $\derz{r}$ $>0$ for $z\in
I.$
\end{proof}
We note finally that these results are consistent with physical
interpretation where $t$ is interpreted as "look-back" time from an
observer at $r=0$ with a (locally) expanding universe (c.f.
\cite{cpt, kt}).
\section{Study of Singularities, part A: Critical points depending on ${\gO_{\gL}}$}
We now consider how singularities may depend on the parameter
$\gO_{\gL}$ for $R[z]$ $=R_{\gO_{\gL}}[z]$ . As in
Proposition \ref{second}, a singularity arises at $z=z_{\gL}$ where
\begin{equation}\left[R_z\right|_{z=z_{\gL}}=\frac{(1+z_{\gL})\cdot
\cI(1+z_{\gL})- \int_1^{1+z_{\gL}}\cI(y)dy}{(1+z_{\gL})^2}=0.
\label{root}\end{equation}
\begin{prop}\label{zests}
The values $z_{{\gL}}$ satisfy $z_{{\gL}}\geq 1.25,$ increasing
as a continuous function of $\gO_{\gL}$ in the domain $0\leq$ $\gO_{\gL}$ $<1.$ Moreover,
there are positive constants $c_1,c_2$ and $c_3$ so that
$$\left[c_1\ln\left(\frac{1}{1-\gO_{\gL}}\right) +c_2\right]^{1/4}\leq
z_{\gL}+1\leq c_3\frac{1}{1-\gO_{\gL}}$$
$\forall$ $\gO_{\gL}$. Hence, $z_{\gL}$ $\rightarrow$ $+\infty$
as $\gO_{\gL}$ $\rightarrow 1.$
\end{prop}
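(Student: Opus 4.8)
The plan is to put \eqref{root} into scale-invariant form. Write $x=1+z_{\gL}$ and $\beta=\gO_{\gL}/(1-\gO_{\gL})$, so that $\gO_{\gL}\mapsto\beta$ is a strictly increasing bijection of $[0,1)$ onto $[0,\infty)$ with $1+\beta=1/(1-\gO_{\gL})$; since $\gO_{\gL}+(1-\gO_{\gL})y^{3}=(1-\gO_{\gL})(y^{3}+\beta)$, equation \eqref{root} is equivalent to $\Psi(x,\beta)=0$, where
$$\Psi(x,\beta)\df\frac{x}{\sqrt{x^{3}+\beta}}-\int_{1}^{x}\frac{dy}{\sqrt{y^{3}+\beta}}.$$
A one-line computation gives $\partial_{x}\Psi=-\tfrac32\,x^{3}(x^{3}+\beta)^{-3/2}<0$, while $\Psi(1,\beta)=(1+\beta)^{-1/2}>0$ and $\Psi(x,\beta)\to-\int_{1}^{\infty}(y^{3}+\beta)^{-1/2}\,dy<0$ as $x\to\infty$; hence for each $\beta\ge0$ there is a unique root $x=X(\beta)>1$, which is exactly the $1+z_{\gL}$ of \eqref{root}, and for $Y\ge1$ one has $X(\beta)\ge Y\iff\Psi(Y,\beta)\ge0$ (and the same with both inequalities reversed). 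This equivalence is the tool I would use for every estimate below.

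Continuity and monotonicity: $\Psi$ is $C^{\infty}$ on $(1,\infty)\times[0,\infty)$ and $\partial_{x}\Psi\neq0$, so by the implicit function theorem $X$, and with it $z_{\gL}$ as a function of $\gO_{\gL}$, is $C^{\infty}$, hence continuous, with $X'(\beta)=-\partial_{\beta}\Psi/\partial_{x}\Psi$. One computes $\partial_{\beta}\Psi(x,\beta)=\tfrac12\bigl(\int_{1}^{x}(y^{3}+\beta)^{-3/2}\,dy-x(x^{3}+\beta)^{-3/2}\bigr)$. Evaluating at $x=X(\beta)$ and using that $(y^{3}+\beta)^{-1}\ge(x^{3}+\beta)^{-1}$ on $[1,x]$ together with the defining identity $\int_{1}^{x}(y^{3}+\beta)^{-1/2}\,dy=x(x^{3}+\beta)^{-1/2}$, we get $\int_{1}^{x}(y^{3}+\beta)^{-3/2}\,dy\ge(x^{3}+\beta)^{-1}\int_{1}^{x}(y^{3}+\beta)^{-1/2}\,dy=x(x^{3}+\beta)^{-3/2}$, so $\partial_{\beta}\Psi|_{X(\beta)}>0$ and therefore $X'(\beta)>0$; composing with the increasing map $\gO_{\gL}\mapsto\beta$ shows $z_{\gL}$ is strictly increasing in $\gO_{\gL}$. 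The bound $z_{\gL}\ge1.25$ is now immediate from the case $\gO_{\gL}=0$: there $\Psi(x,0)=3x^{-1/2}-2$, which vanishes exactly at $x=9/4$, so $z_{\gL}=1.25$ at $\gO_{\gL}=0$ and $z_{\gL}\ge1.25$ everywhere by monotonicity.

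For the upper bound I would verify $\Psi\bigl(3(1+\beta),\beta\bigr)\le0$. Using $x/\sqrt{x^{3}+\beta}\le x^{-1/2}$ and the elementary inequality $(y+\beta^{1/3})^{3}\ge y^{3}+\beta$, one has $\int_{1}^{x}(y^{3}+\beta)^{-1/2}\,dy\ge 2(1+\beta^{1/3})^{-1/2}-2(x+\beta^{1/3})^{-1/2}\ge 2(1+\beta^{1/3})^{-1/2}-2x^{-1/2}$, whence $\Psi(x,\beta)\le 3x^{-1/2}-2(1+\beta^{1/3})^{-1/2}$. At $x=3(1+\beta)$ the right side is $\le0$ exactly when $3(1+\beta^{1/3})\le4(1+\beta)$, i.e.\ $(2s-1)^{2}(s+1)\ge0$ with $s=\beta^{1/3}$, which always holds; hence $1+z_{\gL}=X(\beta)\le3(1+\beta)=3/(1-\gO_{\gL})$, so $c_{3}=3$ works. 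For the logarithmic lower bound take $Y=[\ln(1+\beta)+1]^{1/4}$, i.e.\ $c_{1}=c_{2}=1$. For $\beta\ge3$ one has $Y\ge1$, and the crude bound $\int_{1}^{Y}(y^{3}+\beta)^{-1/2}\,dy\le(Y-1)\beta^{-1/2}$ gives $\Psi(Y,\beta)\ge Y(Y^{3}+\beta)^{-1/2}-(Y-1)\beta^{-1/2}$, which is $\ge0$ precisely when $\beta\ge(Y-1)^{2}Y^{3}/(2Y-1)$; and $(Y-1)^{2}Y^{3}/(2Y-1)\le Y^{4}=\ln(1+\beta)+1\le\beta$ for $\beta\ge3$, so $\Psi(Y,\beta)\ge0$ and $1+z_{\gL}\ge Y$. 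For $0\le\beta<3$ we have $Y\le(\ln4+1)^{1/4}<9/4\le X(0)\le X(\beta)=1+z_{\gL}$ by monotonicity, so the bound holds there too; substituting $1+\beta=1/(1-\gO_{\gL})$ gives the stated two-sided estimate, and letting $\gO_{\gL}\to1^{-}$ (so $\beta\to\infty$) forces $z_{\gL}\to\infty$.

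The only genuinely non-routine point is the sign of $\partial_{\beta}\Psi$ at the root used in the monotonicity step: it is not visible until the defining identity is fed back into the estimate as above. Everything else is elementary; the one thing to watch is the split of $\beta$ into a bounded range — handled by monotonicity together with the exact value $X(0)=9/4$ — and a large range — handled by the crude bound — which is harmless because both requested bounds are far from sharp.
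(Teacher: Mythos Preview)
Your argument is correct, and it reaches the same conclusions by a route that differs from the paper's in two places worth noting.

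For continuity and monotonicity you and the paper do the same thing: implicit differentiation of the defining relation, then a sign computation that feeds the relation back into an integral comparison. Your substitution $\beta=\gO_{\gL}/(1-\gO_{\gL})$ makes this step visibly cleaner, since the integrand becomes $(y^{3}+\beta)^{-1/2}$ and the comparison $(y^{3}+\beta)^{-1}\ge(x^{3}+\beta)^{-1}$ is immediate; the paper instead works with $\gO_{\gL}$ directly and arrives at the auxiliary kernel $K(y,q)=\cI^{2}(q)(q^{3}-1)-\cI^{2}(y)(y^{3}-1)$, whose sign requires a separate monotonicity observation. Both are the same idea in different clothing, and both give $z_{\gL}=1.25$ at $\gO_{\gL}=0$ exactly.

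Where the arguments genuinely diverge is in the quantitative bounds. The paper obtains the lower bound by \emph{integrating the differential inequality} for $dq/d\gO_{\gL}$: from $Q(q)\,dq/d\gO_{\gL}\ge 3k_{1}/4$ with $Q(q)=3q^{3}(1-\gO_{\gL})$ one integrates to $q^{4}\ge k_{1}\ln\bigl(1/(1-\gO_{\gL})\bigr)+2.25^{4}$. You bypass this entirely with a \emph{comparison} argument, exhibiting an explicit barrier $Y=[\ln(1+\beta)+1]^{1/4}$ and checking $\Psi(Y,\beta)\ge0$ directly. Your method yields explicit constants $c_{1}=c_{2}=1$, $c_{3}=3$, whereas the paper's constants involve integrals such as $k_{1}=\tfrac{4}{3}\int_{1}^{2.25}\cI(y)K(y,2.25)\,dy$ that are not evaluated. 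For the upper bound both proofs use a direct estimate rather than the ODE; the paper bounds $\cI(y)\ge(1+y^{3})^{-1/2}$ and reads off $\sqrt{q}\le k_{2}/\sqrt{1-\gO_{\gL}}$, while you bound $(y^{3}+\beta)^{-1/2}\ge(y+\beta^{1/3})^{-3/2}$ and close with the factorization $4\beta-3\beta^{1/3}+1=(2\beta^{1/3}-1)^{2}(\beta^{1/3}+1)\ge0$. The trade-off: the paper's differential-inequality route is the natural continuation once implicit differentiation is in hand and would adapt more readily if one wanted sharper asymptotics for $dq/d\gO_{\gL}$; your barrier method is shorter, self-contained, and produces numerical constants, at the cost of the ad hoc case split $\beta\gtrless3$.
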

\begin{proof}
It is not difficult to show from (\ref{root})
that $z_{{\gL}}|_{_{\gO_{\gL}=0}}=1.25$
and that  $z_{{\gL}}$ $>0$ $\forall \gO_{\gL}.$
Now, let us set $q \df$ $1+z_{{\gL}}$
and note that (\ref{root}) gives
$q\cI(q)=\int_1^q\cI(y)dy.$ Implicit differentiation now gives
$$q\frac{d\,q}{d\gO_{\gL}}\frac{\partial\cI(q)}{\partial q}
=\int_1^q\frac{\partial\cI(y)}{\partial\gO_{\gL}}dy-q\frac{\partial\cI(q)}{\partial\gO_{\gL}}.
$$
Applying $q\cI^3(q)$ $=\cI^2(q)\int_1^{q}\cI(y)dy$ on the second term, right-hand side, we compute
$$Q(q)\frac{d\,q}{d\gO_{\gL}}=\frac{1}{\cI^3(q)}\int_1^q\cI(y)K(y,q)dy$$
where
$K(y,q)$ $\df$ $\cI^2(q)(q^3-1)$ $-\cI^2(y)(y^3-1)$ and
$Q(q)\df 3q^3(1-\gO_{\gL}).$
Since $\cI^2(y)(y^3-1)$ is strictly increasing as a function of $y\geq 1,$
we find $K(y,q)> 0$ for $1\leq y$ $<q$. Thus, $\frac{d\,\,q}{d\gO_{\gL}}>0$ $\forall \gO_{\gL}$
and hence $q\geq 2.25$ $\forall \gO_{\gL}.$
For $k_1\df\frac{4}{3}\int_1^{2.25}\cI(y)K(y,2.25)dy$ we estimate
$$Q(q)\frac{d\,q}{d\gO_{\gL}}\geq \frac{3k_1/4}{\cI^3(q)}\geq 3k_1/4$$
so that
$$4\int_{2.25}^q{y^3dy}\geq
k_1\int_{0}^{\gO_{\gL}}\frac{dx}{1-x},$$
and our choices of $c_1$ and $c_2$ are clear since
$$q^4\geq k_1 \ln\left(\frac{1}{1-\gO_{\gL}}\right)+2.25^4$$

Next we note that
$$q=\frac{1}{\cI(q)}\int_1^q\cI(y)dy \geq \sqrt{(1-\gO_{\gL})q^3}\int_1^q\frac{dy}{\sqrt{1+y^3}}$$
so that $\sqrt{q}\leq \frac{k_2}{\sqrt{1-\gO_{\gL}}}$
with $1/k_2\df\int_1^{2.25}\frac{dy}{\sqrt{1+y^3}}.$
We choose $c_3=k_2^2$ and we are done.
\end{proof}

With a broad range of values $z_{\gO_{\gL}},$ bound by
the estimates of Proposition \ref{zests},
one may expect difficulties
in applying the present work to cosmological models - with
singularities $z_{\gO_{\gL}}$ well within
observed redshift values \cite{c,cpt,cr}.
However, some such singularities may conceivably be of type $0/0$ if
both $R_z$ and $\sqrt{1+2E[z]}$ $-$ $\sqrt{2M[z]/R[z]+2E[z]}$
were to have zeros of identical order,
rendering the singularities, in some
sense, removable. We demonstrate such a case in the next section. 
\section{Study of Singularities, part B: FRW Model}\label{studyB}
Using solutions from the well-known Freedman-Robertson-Walker model, we analyze our map $(E,D_L,R_0)$
$\rightarrow$ $(r,t,M)$ and study
singularities of the system (\ref{ode}) and their dependence on $\gO_{\gL}$.
We restrict the map as follows:
We fix
the function $E(r)$ and restrict $D_L(z)$ and $R_0(r)$ to certain
one-parameter classes in the pre-image space; and, we fix the function $M(r)$ in the image space. Here, we consider data given by
$R[z]$ $=R_{\gO_{\gL}}[z]$ as in (\ref{R}) and we set
\begin{equation} E=\frac{r^2}{2}, M=\frac{r^3}{2},R_0 = cM/E=cr\label{case}\end{equation} for
parameter $c> 0.$ Following \cite{aag}, we have $R(r,t)=r\cdot a(t)$
where for some (real) parameter $\eta$ with $k_c$ $\df$
$\sqrt{c+c^2},$
\begin{align}\label{frwsol}
a(t)&=\frac{\cosh{\eta}-1}{2}+(c\cosh{\eta}+k_c\,\sinh{\eta})&\df\fF_c(\eta)\\
t&=\frac{\sinh{\eta}-\eta}{\sqrt{2}}+\sqrt{2}(c\sinh{\eta}+k_c\,\cosh{\eta})&\df\fG_c(\eta)\notag
\end{align}
Here, $\eta$ is known as "conformal time" which in our case depends
on $a$ and $t$ by
$\eta=\eta(t)=\int_{\sqrt{2}k_c}^t\frac{d\gt}{\sqrt{2}a(\gt)}.$ We
note that $\fF_c$ and $\fG_c$ are each invertible for $\eta$ on an
open interval containing $0.$ In particular, $\fF_c$ is invertible
for
$\eta$ $>-\mbox{arctanh}(2k_c/(1+2c))$ and $\fG_c$ is invertible
where $a>0,$ so that $a(t)$
$=\fF_c\circ\fG^{-1}_c(t)$ indeed holds
for $t$ in a neighborhood containing $k_c.$
Moreover, using (\ref{dt}) and setting $c=$ $a(t_0)$ with
$t_0$ $\df$ $t(z_0)$ $=\sqrt{2}k_c$ for some $z_0>0,$
$$\derz{t}=\frac{-a(t)}{(1+z)\dot{a}(t)};\,\,a[z]=a(t(z))=c\frac{1+z_0}{1+z}.$$

Given $R[z],$ we find, indirectly, the resulting solutions of
(\ref{ode}):
\begin{align}
t(z)&=\fG_c(\fF_c^{-1}(a[z]))\label{sols}\\
r(z)&=R[z]/a[z]=\frac{\int_1^{1+z}\cI(y)dy}{(1+z_0)c}\notag\\
M[z]&=\frac{1}{2}\left(\frac{\int_1^{1+z}\cI(y)dy}{(1+z_0)c}\right)^3
\notag
\end{align}
As for the relevance of this case to physical models, we note that the associated energy density $\rho[z]$ is a smooth function on $(0,\infty).$

We are ready to state
\begin{thm}\label{sol}
For any given $0\leq$ $\gO_{\gL}\leq 1$ and $z_0>0$ there is a smooth function $R_0(r)$ so that $E(r),$ $M(r)$ as in (\ref{case}) and $R[z]$ $=R_{\gO_{\gL}}[z],$ the system (\ref{ode}) with initial
conditions
\begin{equation}\vec{X}(z_0)=\left(\begin{matrix}R[z_0]/c
\\\sqrt{2}k_c\\ (R[z_0]/c)^{3}/2\end{matrix}\right)\label{initcond}\end{equation}
has a smooth solution $\vec{X}$ on an open interval $I\ni$ $z_0.$
\end{thm}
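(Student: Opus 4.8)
The plan is to deduce the theorem from Proposition \ref{third}, i.e.\ to exhibit an $R_0$ for which the initial point of (\ref{initcond}) lies in $\cT$ with $R\neq 2M$, so that the existence--uniqueness theorem packaged in that proposition yields a smooth local solution. By (\ref{case}) the construction of $R_0$ comes down to one parameter: take $R_0(r)\df cr$, for $c>0$ to be pinned down below. This is smooth and positive on $(0,\infty)$, and it is consistent with $R_0(r)=R(r,t_0)=r\,a(t_0)$ for every admissible $c$, since $\fG_c(0)=\sqrt2\,k_c=t_0$ and $\fF_c(0)=c$ force $a(t_0)=c$. With $E=r^2/2$, $M=r^3/2$ and the prescribed data $R[z]=R_{\gO_{\gL}}[z]$ --- which is $C^{\infty}$ in a neighborhood of $z_0$ because $\gO_{\gL}+(1-\gO_{\gL})y^3\geq 1>0$ for $y\geq 1$ --- the right-hand sides of (\ref{ode}) are smooth functions of $(z,r,t,M)$ wherever $\cG E^{\prime}R-\cF$ and $\gd\gs\sqrt{2E+2M/R}-\sqrt{1+2E}$ do not vanish; hence it suffices to check nonvanishing of these two quantities at the point fixed by (\ref{initcond}), equivalently that $(R,R_0,E,M,t)[z_0]\in\cT$ and $R[z_0]\neq 2M[z_0]$.

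The key observation is that at $z=z_0$ the null geodesic lies on the reference slice $t=t_0$. Indeed (\ref{initcond}) gives $r(z_0)=R[z_0]/c$, so $R_0(r(z_0))=R[z_0]$, whence $\fh=R_0/R=1$ and $\xi=\xi^{\sharp}$ there; then (\ref{manifold}) gives $J[z_0]=\sqrt2\,(t(z_0)-t_0)-\gs\int_{R_0}^{R}\sqrt{\gt/(\gt E+M)}\,d\gt=0$ (both terms vanish), so the point lies on $\fO^{\pm}$. Since $\fh=1$, the integrals entering $J_M$ and $J_E$ in (\ref{Jdiff}) vanish, so $-(\partial_RJ)\cG=\partial_MJ=0$ forces $\cG[z_0]=0$, while $-(\partial_RJ)\cF=E^{\prime}\partial_EJ+R_0^{\prime}\partial_{R_0}J$ --- with $\partial_{R_0}J=-\partial_RJ$ and $\partial_RJ\neq 0$ at $z_0$ --- forces $\cF[z_0]=R_0^{\prime}(r(z_0))=c$. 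Therefore $(\cG E^{\prime}R-\cF)[z_0]=-c\neq 0$: alternative 2) of Proposition \ref{first} does not hold here, so the point lies off $\pi\cM^{\pm}$ and hence in $\cT$.

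To finish, dispose of the exceptional locus $R=2M$: by (\ref{initcond}), $2M[z_0]=(R[z_0]/c)^3$, so $R[z_0]\neq 2M[z_0]$ precisely when $c\neq R_{\gO_{\gL}}[z_0]^{2/3}$; fix such a $c$. Working in the branch $\gs=\gd=1$ appropriate to the expanding FRW solution (where $\dot R=r\dot a>0$ and $R^{\prime}=a>0$), the determinant identity $\det\cU=B(E^{\prime}\cG-\cF/R)(\sqrt{1+2E}-\gs\gd\sqrt{2E+2M/R})$ established before Proposition \ref{first} reduces at $z_0$ to $B[z_0]\cdot(-c/R[z_0])\cdot(\sqrt{1+2E}-\sqrt{2E+2M/R})[z_0]$, each factor of which is nonzero (the third because $R[z_0]\neq 2M[z_0]$). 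Hence Proposition \ref{third} applies and provides a unique $C^{\infty}$ solution $\vec{X}$ of (\ref{ode}) with $\vec{X}(z_0)$ as in (\ref{initcond}) on an open interval $I\ni z_0$. By uniqueness this solution is the explicit FRW family (\ref{sols}) (which one checks against (\ref{ode}) via (\ref{frwsol}) and $a[z]=c(1+z_0)/(1+z)$), and for it the density $\rho=M^{\prime}/(R^2R^{\prime})$ is manifestly smooth on $I$.

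The one step that demands genuine care is the evaluation at $z_0$: recognizing that the geodesic's starting point lies on the reference time slice, which forces $\fh=1$ and collapses the otherwise intractable $\cF$ and $\cG$ to the simple values $R_0^{\prime}$ and $0$. Once that is in hand, non-singularity of $\cU$ at $z_0$ --- and hence on a neighborhood --- is immediate, and the rest is the standard local existence--uniqueness theorem already invoked in Proposition \ref{third}, together with routine bookkeeping to identify the solution with (\ref{sols}).
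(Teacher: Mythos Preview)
Your argument is correct, but it is not the paper's. The paper proves the theorem by simply exhibiting the explicit FRW solution \eqref{sols}: since $R(t,r)=r\,a(t)$ with $a=\fF_c\circ\fG_c^{-1}$, one has $R'=a>0$ and $\dot R'=\dot a>0$ near $\eta=0$, the initial conditions \eqref{initcond} correspond to $\eta=0$, and continuity gives a smooth solution on a neighborhood of $z_0$. No appeal to Proposition~\ref{third} is made at this stage; that proposition is only brought in afterward, in Theorem~\ref{solsb}, to establish uniqueness (for $z_0\neq z_{\gL}$) by showing $\cG E'R-\cF<0$ globally via \eqref{gerf}. Your route instead verifies the hypotheses of Proposition~\ref{third} directly at $z_0$: the observation that the initial point sits on the reference slice, forcing $\fh=1$, $\cG=0$, $\cF=R_0'=c$, is clean and makes the nonsingularity check immediate. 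The price is that you must exclude $c=R_{\gO_{\gL}}[z_0]^{2/3}$ to avoid $R=2M$, whereas the paper's explicit construction works for every $c>0$; since the theorem only asks for \emph{some} $R_0$, this costs nothing. A bonus of your approach is that you get uniqueness for free from Proposition~\ref{third} (even at $z_0=z_{\gL}$, for your restricted $c$), partly anticipating Theorem~\ref{solsb}; the paper's direct verification, on the other hand, is what later allows the singular case $c=c_{\gL}$ to be analyzed in Theorem~\ref{smooth}.
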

\begin{proof}
For those $\eta$ where the solutions (\ref{frwsol}) hold we
also have $R^{\prime}$ $=a(t)$ $>0$ and $\dot{R}^{\prime}$
$=\dert{a}$ $=\dereta{a}/\dereta{t}$ $>0.$ Since the initial
conditions hold for $\eta=$ $\fF_c^{-1}(a(z_0))$ $=0,$ (\ref{sols})
also holds for $\eta$ in some interval containing $0$.
From continuity arguments we see there is also some open interval $I$ $\ni z_0$ on which such
solutions $\vec{X}(z)$ in turn hold.
\end{proof}
We note that the above method provides no solutions for $r(z)$ and $M[z]$ in
the case $R_0\equiv 0$ unless more data is prescribed, such as
asymptotic conditions for the ratio $R/c$ in terms $z$ and
$z_0$ (c.f. Example A, p. 5 \cite{cr}).  Moreover, we note that singularities
may occur in the form $R=2M$ and/or $\dot{a}=0$ away from $z_0$ so that we
may not arbitrarily extend the domain $I$ of the solution via Proposition \ref{third}.

We may apply Proposition \ref{third} in regards to uniqueness of solution: To rule out one type of singularity,
we compute $E^{\prime} \cG R$  $-\cF$ via (\ref{gerf}). First, we set
$\xi=r/(2a(t))$ and $\xi_0=r/(2c)$ and compute
\begin{align}
\frac{E^{\prime}\cdot(J_E-RJ_M)}{J_R}=&
\sqrt{E+\xi}\frac{RE^{\prime}}{2}\int_1^{c/a(t)}\frac{s^{1/2}(s-1)ds}{(Es+\xi)^{3/2}}\notag\\
=&-\frac{1}{2}\int^{c}_{a(t)}\frac{\sqrt{\gt}
(\gt-a(t))}{(\gt+1)^{3/2}}d\gt\sqrt{\frac{a(t)+1}{a(t)}}\leq 0\notag
\end{align}
for $c,a(t)>0$.
We now compute,
\begin{align}
\frac{R_0^{\prime}J_{R_0}}{J_R}&= -c\cdot\sqrt{\frac{R_0}{ER_0+M}}/\sqrt{\frac{R}{ER+M}}\notag\\
&= -c\cdot\sqrt{\frac{c}{c+1}}\sqrt{\frac{a(t)+1}{a(t)}}\notag
\end{align}
which is strictly negative. Therefore, $E^{\prime} \cG R$  $-\cF$ $< 0$ and we have ruled out
case 2) of Proposition \ref{first}. Knowing also that $\dot{R}^{\prime}[z]|_{z=z_0}$ $\neq 0$ in this case we state
\begin{thm}\label{solsb}
The solutions of Theorem \ref{sol} are unique for $z_0$ $\neq$ $z_{\gL}.$
\end{thm}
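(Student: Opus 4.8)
The plan is to invoke Proposition \ref{third}, which guarantees a unique $C^\infty$ solution of the system (\ref{ode}) through a point $z_0$ provided $\cU[z_0]$ is non-singular, i.e.\ provided neither exceptional case of Proposition \ref{first} occurs at $z_0$. So the task reduces to verifying, for the FRW data (\ref{case}) with $R[z]=R_{\gO_{\gL}}[z]$ and the initial conditions (\ref{initcond}), that (i) case 1) of Proposition \ref{first} ($\gd=\gs$ together with $R=2M$) fails, and (ii) case 2) ($E'R\cG=\cF$, equivalently $E'\cG R-\cF=0$) fails, at least on a neighborhood of $z_0$; the claimed exclusion $z_0\neq z_{\gL}$ is exactly what is needed to dispose of the remaining obstruction.

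First I would record what the excerpt has already computed: the displayed identities immediately preceding the statement show
$$\frac{E'(J_E-RJ_M)}{J_R}=-\frac12\int_{a(t)}^{c}\frac{\sqrt{\gt}\,(\gt-a(t))}{(\gt+1)^{3/2}}\,d\gt\,\sqrt{\frac{a(t)+1}{a(t)}}\le 0,$$
$$\frac{R_0'J_{R_0}}{J_R}=-c\sqrt{\frac{c}{c+1}}\sqrt{\frac{a(t)+1}{a(t)}}<0,$$
for $c,a(t)>0$, and combining these with the formula (\ref{gerf}) for $\cG E'R-\cF$ yields $E'\cG R-\cF<0$ strictly. That is precisely the statement that case 2) of Proposition \ref{first} never holds along the FRW solution, so no issue from that term — and, crucially, this holds at every $z$ in the interval $I$ of Theorem \ref{sol}, not merely at $z_0$. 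Next I would handle case 1). Here the relevant computation is the one underlying Proposition \ref{second}: when $\gd=\gs$ and $R=2M$ one has $R_z=0$ forced by (\ref{chain}) together with the non-vanishing of $\partial^2_{r,t}R$. For the FRW data $R=r\cdot a(t)$ and $R'=a(t)>0$, $\dot R'=\dot a=\dereta a/\dereta t>0$ at $\eta=0$, so the partial derivatives of $R$ are nonzero near $z_0$; then $\gd=\gs$ with $R=2M$ would force $R_z=0$, i.e.\ $z$ a root of $R_{\gO_{\gL}}$, which by (\ref{root}) means $z=z_{\gL}$. Excluding $z_0=z_{\gL}$ by hypothesis, $\cU[z_0]$ avoids case 1) as well, hence $\det\cU[z_0]\ne0$, and Proposition \ref{third} delivers the unique $C^\infty$ solution through $z_0$; since the FRW construction of Theorem \ref{sol} produces a solution through exactly that point, it must coincide with this one.

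The main obstacle is making the "near $z_0$" localization airtight: Proposition \ref{third} gives uniqueness only on \emph{some} open interval about $z_0$, whereas Theorem \ref{sol} asserts existence on \emph{some} interval $I\ni z_0$, and one must argue these can be taken compatible so that the two solutions agree on a common neighborhood — a routine continuity/maximality argument, since both $\det\cU\ne0$ (via the strict inequality $E'\cG R-\cF<0$ that persists on $I$) and the non-vanishing of the partial derivatives of $R$ are open conditions, failing only at the isolated point $z=z_{\gL}$ among the singularities of the first kind. I would also take care to note that the $R=2M$ singularity and the $\dot a=0$ singularity flagged in the remark after Theorem \ref{sol} can occur away from $z_0$, so the interval $I$ cannot be extended arbitrarily; but within $I$, away from $z_{\gL}$, the uniqueness is unconditional. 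Assembling these observations, the conclusion is that the solution exhibited in Theorem \ref{sol} is the unique solution of (\ref{ode}) with the stated initial data whenever $z_0\ne z_{\gL}$.
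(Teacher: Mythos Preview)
Your proposal is correct and follows essentially the same approach as the paper: the paper's argument consists precisely of the computations displayed just before the theorem, showing $E'\cG R-\cF<0$ (ruling out case 2 of Proposition~\ref{first}), together with the remark that $\dot R'[z_0]\neq 0$, and then an appeal to Proposition~\ref{third}; the exclusion $z_0\neq z_{\gL}$ is what rules out case 1 via the equivalence (through (\ref{chain})) between $R=2M$ and $R_z=0$, exactly as you spell out. Your added discussion of interval compatibility is more careful than the paper, which leaves that localization implicit, but the substance of the argument is the same.
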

The solutions (\ref{ode}) stand in glaring contrast to the result of Proposition \ref{first}:
Indeed, we note that the right-hand sides of equations (\ref{dr})
and (\ref{dt})
under the conditions of Theorem \ref{sol} have no positive
singularities $z_{\gL}$ as $E^{\prime}$ $+M^{\prime}/R$
$-MR^{\prime}/R^2$ $=r+r/a>0;$ yet, we find that the determinant of $\cU$
in (\ref{sys}) vanishes at $z=z_{\gL}$.
Since Theorem \ref{sol} applies in the case $z_0$ $=z_{\gL},$
one may suspect that these
singularities are, in some sense, removable - so we shall see
in remainder of this section.

We give specific cases, depending on $R_0$, in which the solutions
$\vec{X}$ can be smoothly extended across singularities
$z=z_{{\gL}}.$ For such solutions to be valid, it suffices that
$\dot{a}[z]>0$ is smooth, that (\ref{root}) holds, and that as in
(\ref{chain}) $R[z_{\gL}]$ $=$ $2M[z_{\gL}]$ (or perhaps as smooth
extensions defined at $z_{\gL}$). Then,
$$R^2[z_{\gL}]=\frac{((1+z_0)c)^3}{(1+z_{\gL})^3}
$$
and, hence,
$$\cI(1+z_{\gL})=R[z_{\gL}]=\frac{((1+z_0)c)^{3/2}}{(1+z_{\gL})^{3/2}}.$$
From this we obtain the corresponding value of $c$ by which we define
\begin{equation}c_{{\gL}}\df\frac{1+z_{\gL}}{
(1+z_{0})(\gO_{\gL}+(1-\gO_{\gL})(1+z_{\gL})^3)^{1/3}}.
\label{cL}\end{equation}

We are ready to state
\begin{thm}\label{smooth} Under the hypothesis of Theorem \ref{sol}
for every $z_0>0$ and $0\leq$ $\gO_{\gL}$ $<1$ there is a smooth $R_0(r)$ for which
the resulting solution $\vec{X}(z)$ with initial conditions (\ref{initcond}) can be uniquely extended to be of class $C^{\go}((0,\infty)).$
\end{thm}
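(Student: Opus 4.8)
The plan is to produce, for each admissible pair $(z_0,\gO_{\gL})$, an explicit smooth profile $R_0(r)$ for which the FRW construction of Theorem~\ref{sol} yields a solution that is real-analytic past the critical redshift $z_{\gL}$. The starting point is the observation that in the FRW case $R[z]=R_{\gO_{\gL}}[z]$ and $r(z)=R[z]/a[z]$ are themselves real-analytic functions of $z$ on $(0,\infty)$ (the former because $\cI(y)$ is analytic and positive on $[1,\infty)$, the latter because $a[z]=c(1+z_0)/(1+z)$ never vanishes), and likewise $M[z]=\tfrac12 r(z)^3$ and $t(z)=\fG_c(\fF_c^{-1}(a[z]))$ are analytic in $z$ wherever the inverse $\fF_c^{-1}$ is defined. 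Thus the candidate $\vec{X}(z)=(r(z),t(z),M(z))$ in (\ref{sols}) is a perfectly good $C^{\go}$ curve through any $z_{\gL}$; what fails at $z=z_{\gL}$ is only that it solves the \emph{system} (\ref{ode}), whose coefficients blow up there because $R_z\to 0$ while the factor $\sqrt{1+2E}-\gs\gd\sqrt{2E+2M/R}$ may not. The key idea is therefore to choose $R_0$ so that $2M[z_{\gL}]=R[z_{\gL}]$, forcing that second factor to vanish too, so that the right-hand sides of (\ref{ode}) are genuine $0/0$ indeterminacies that extend analytically.

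First I would fix $c=c_{\gL}$ as in (\ref{cL}); by construction this is exactly the value making $R[z_{\gL}]=2M[z_{\gL}]$, equivalently $a[z_{\gL}]=r(z_{\gL})/2$, equivalently $\xi[z_{\gL}]=1/2$. Next I would verify that with this $c$ the FRW relations (\ref{frwsol})--(\ref{sols}) remain valid on a neighborhood of $z_{\gL}$: one needs $\dot a[z]>0$ there, which holds since $\dot a=\dereta a/\dereta t>0$ wherever $\fF_c^{-1}$ is defined, and one needs $\eta=\fF_c^{-1}(a[z])$ to stay in the invertibility range, which is an open condition. Then I would define $R_0(r)\df r/\xi_0=2c_{\gL}\,r$ along the curve and, more to the point, extend it to a globally smooth positive function of $r$ (any smooth extension agreeing with $cM/E=c_{\gL}r$ near the relevant $r$-interval works, since only the germ of $R_0$ along $r(z)$ enters (\ref{ode}) through $\cF,\cG$). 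With $R_0$ so chosen, the hypotheses of Theorem~\ref{sol} hold, giving a smooth solution $\vec X$ on an interval $I\ni z_0$.

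The analytic-continuation step is the heart of the argument. On $I\setminus\{z_{\gL}\}$ the curve $\vec X(z)$ of (\ref{sols}) satisfies (\ref{ode}) (by Theorem~\ref{sol} and uniqueness, Theorem~\ref{solsb}, away from $z_{\gL}$). I would then show directly that the formulas (\ref{sols}) define $\vec X$ as a $C^{\go}$ curve on all of $(0,\infty)$ — this reduces to checking $\fF_c^{-1}(a[z])$ stays in the analytic range of $\fF_c$ for all $z>0$, which follows because $a[z]=c_{\gL}(1+z_0)/(1+z)$ ranges over an interval on which $\fF_c$ is a real-analytic diffeomorphism (using the explicit invertibility bound $\eta>-\operatorname{arctanh}(2k_c/(1+2c))$ quoted in the text, together with monotonicity of $\fF_c$). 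To see that this curve still solves (\ref{ode}) at $z_{\gL}$, I would use (\ref{chain}): since both $R_z$ and $\gs\gd\sqrt{2E+2M/R}-\sqrt{1+2E}$ vanish at $z_{\gL}$, the quotient $R_z/(\gs\gd\sqrt{2E+2M/R}-\sqrt{1+2E})$ has a removable singularity — both numerator and denominator are analytic in $z$ and the denominator's zero is simple (its $z$-derivative at $z_{\gL}$ is a nonzero multiple of $\derz{M}/R-\xi R_z/R$, which by the decoupled $\xi$-equation in (\ref{newerode}) equals $\sqrt 6\sqrt{1+\xi}/(1+z)\neq 0$). Hence every right-hand side in (\ref{ode}) extends analytically across $z_{\gL}$, and the analytic curve (\ref{sols}) satisfies the (now analytic) system on a full neighborhood of $z_{\gL}$; since it agrees with $\vec X$ on one side, it \emph{is} the unique analytic continuation of $\vec X$, giving the asserted $C^{\go}((0,\infty))$ extension.

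The main obstacle I anticipate is precisely the simple-zero claim: one must rule out that $R_z$ vanishes to higher order than $\sqrt{1+2E}-\gs\gd\sqrt{2E+2M/R}$ at $z_{\gL}$ (which would leave a genuine pole rather than a removable singularity). This is where the decoupled equation for $\derz{\xi}$ in (\ref{newerode}) is essential — it pins down $\derz{\xi}|_{z_{\gL}}=\sqrt 6\sqrt{3/2}/(1+z_{\gL})>0$ independently of $R_z$, so the denominator's zero is exactly first order while $R_z$'s zero is also first order by Proposition~\ref{zests} and (\ref{root}) (the map $\gO_{\gL}\mapsto z_{\gL}$ being a diffeomorphism shows $z_{\gL}$ is a nondegenerate root of $R_z$). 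A secondary technical point is checking that the chosen $R_0$ can be extended off the curve $r(z)$ to a globally smooth positive function without disturbing $E=r^2/2$, $M=r^3/2$ — but this is routine since $R_0$ only enters (\ref{ode}) through its $1$-jet along the solution curve.
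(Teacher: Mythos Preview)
Your overall strategy coincides with the paper's: take $R_0(r)=c_{\gL}r$ (not $2c_{\gL}r$; the relation $R_0=cM/E$ from (\ref{case}) gives $R_0=cr$, already globally smooth, so no extension off the curve is needed) so that $2M[z_{\gL}]=R[z_{\gL}]$, and then argue that the resulting $0/0$ indeterminacy in (\ref{ode}) is removable because numerator and denominator each vanish to first order.

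The gap is in how you verify the simple zeros. To show the denominator $\sqrt{2E+2M/R}-\sqrt{1+2E}$ has a simple zero you compute its $z$-derivative as a nonzero multiple of $\derz{\xi}$ and then invoke the decoupled equation (\ref{newerode}) to read off $\derz{\xi}\big|_{z_{\gL}}=\sqrt{6}\sqrt{1+\xi}/(1+z)$. This fails on two counts. First, (\ref{newerode}) was derived under the standing Section~2 hypothesis $E\equiv 1$, whereas the FRW data of Section~\ref{studyB} have $E(r)=r^2/2$; the decoupling does not persist. Second, and more fundamentally, even the correct system (\ref{ode}) cannot be used to evaluate $\derz{\xi}$ at $z_{\gL}$: there $R_z=0$ and $\xi=1/2$, so the quotient $R_z/\big(\gs\gd\sqrt{2E+2M/R}-\sqrt{1+2E}\big)$ appearing in $\derz{M}$ is itself the $0/0$ form you are trying to resolve, and the argument is circular. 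Your separate claim that the diffeomorphism $\gO_{\gL}\mapsto z_{\gL}$ of Proposition~\ref{zests} forces $R_z$ to have a nondegenerate zero is also a non-sequitur: monotone parameter-dependence of the root does not by itself control $\partial_z R_z$ at the root.

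The paper sidesteps all of this by computing directly from the explicit formulas (\ref{sols}) and (\ref{root}) rather than from the ODE: it evaluates
$\tfrac{d^2R}{dz^2}\big|_{z_{\gL}}=-\tfrac{3}{2}(1-\gO_{\gL})(1+z_{\gL})\,\cI^3(1+z_{\gL})<0$ and $\derz{M}\big|_{z_{\gL}}=\tfrac{3}{2}\cI(1+z_{\gL})/(1+z_{\gL})>0$, so both $R_z$ and $2M-R$ have first-order zeros, and then obtains the limit of $-R_z/(2M-R)$ explicitly. Your argument is easily repaired along the same lines, since you already have (\ref{sols}) in hand.
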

\begin{proof}
We take $R_0(r)$ $=c_{_{{\gL}}}r$ for $c_{_{{\gL}}}$ as in
(\ref{cL}). Using (\ref{frwsol}) and following the Chain Rule
formula
\begin{equation}\dot{a}(t(z))=\frac{\derz{a[z]}}{\derz{\fG_{c{_{\gL}}}(\eta(z))}}
=\frac{\derz{a[z]}}{\sqrt{2}\fF_{c_{_{\gL}}}(\eta(z))\derz{\eta[z]}}
=\frac{\derz{a[z]}}{\sqrt{2}a[z]\derz{\eta[z]}},
\label{adot}\end{equation} it suffices to
show that $\eta [z]$ is smooth and that $\derz{\eta}$ is strictly
positive on $(0,\infty).$ To do this, we set
$$\eta[z]= - \int \frac{dt[s]}{\sqrt{2}a[s]},$$
with $dt[z]\df\derz{t(z)}dz,$ and we proceed to analyze the
integral. We may write
\begin{equation}
\label{dtbydz}
\derz{t}=H(z)\frac{-R_z}{2M-R}=H(z)\frac{-R_z}{r(z)(r(z)^2-a(z))}\end{equation}
for some real-valued function $H$ $>0,$ analytic for $z>0.$ Here
$r^2-a$ is an increasing function which vanishes at $z_{\gL}$ and is
of the same sign as that of $-R_z$ $\forall$ $z>0.$

Now, we check the behavior of $\derz{t}$ near the singularity, applying analyticity arguments as follows: Using (\ref{sols}) and (\ref{root}) we compute
\begin{align}
&\left[\frac{d^2R[z]}{dz^2}\right|_{z=z_{\gL}}=-\frac{3(1-\gO_{\gL})}{2}(1+z_{\gL})\cI^3(1+z_{\gL})<0\notag\\
&\left[\derz{M[z]}\right|_{z=z_{\gL}}=\frac{3\cI(1+z_{\gL})}{2(1+z_{\gL})}\df \fM_{\gL}> 0;\notag
\end{align}
and, in turn, we find that
$$2M[z]-R[z] =2\fM_{\gL}\cdot(z-z_{\gL})+\fP(z)(z-z_{\gL})^2
$$ for some analytic function $\fP.$
Here, $\frac{d^2 R[z]}{dz^2}$ $<0$ on a neighborhood of $z_{\gL}$ where $R_z$ has a zero of order exactly 1.
We therefore find that the following limit exists as we compute:
\begin{align}\lim_{z\rightarrow z_{\gL}}\frac{-\derz{R}}{2M[z]-R[z]}
&=\left[-\frac{\frac{d^2 R[z]}{d z^2}}{2\derz{M[z]}}\right|_{z=z_{\gL}}\notag\\
&=\frac{(1-\gO_{\gL})(1+z_{\gL})\cI(1+z_{\gL})}{2}>0\notag
\end{align}
We may conclude therefore that $\frac{-R_z}{2M[z]-R[z]}$ extends to
an analytic, positive-valued function on $(0,\infty)$ and, hence,
$C^{\go}((0,\infty))$ $\ni$ $\derz{\eta[z]}>0.$ Therefore,
$\dot{a}[z]$ is well-defined and is non-zero; and, moreover,
$\eta[z]$ is of class $C^{\go}((0,\infty))$. The uniqueness follows
since Theorem \ref{sol} applies to any open interval not containing
$z_{\gL}$\end{proof}
\begin{remark}\label{sing} Following the calculations in the proof of
Theorem \ref{smooth}, we note that any other choice of positive
$c\neq c_{\gL}$ leads to a singularity of order one at $z=z_{\gL}$
for $\dot{R}^{\prime}[z]$ $=\dot{a}[z]$ as evident in (\ref{adot})
and (\ref{dtbydz}). This gives singularities in equations
(\ref{dr}), (\ref{dt}), and (\ref{Rprime}), and renders the
resulting system (\ref{ode}) invalid at $z_{\gL}$.
\end{remark}
\begin{remark} \label{opt} Our FRW model is consistent with the construction of
$R[z]$ as in \cite{cpt} where $\frac{R_0}{R}$ $=1+z$ with no
prescribed value of $c$. Moreover, our choice of $c=c_{\gL}$ is
optimal in assuring the largest possible domain of
$C^{\go}$-solvability.
\end{remark}
\section*{Discussion}
We make several concluding comments and a conjecture: First, we note
that in the case of Theorem \ref{smooth} the various right-hand
sides of the system (\ref{ode}) can each be written in the form
$\cA(z) +\cB(z)\frac{R_z}{R[z]-2M[z]}$ for smooth functions $\cA$
and $\cB.$ Thus, the arguments for the smooth extension of
$\derz{t}$ beyond the critical points $z_{\gL}$ of $R[z]$ also apply
to $\derz{r}$ (also, of course with circular reasoning, to
$\derz{M}$). However, in the general mapping scheme (\ref{map}), we
have no way to predict the order of the zeros of $\derz{M}$ nor any
a priori justification to expect these singularities to be removable
- not even as we fix our choice of $R[z]$ $=R_{\gO_{\gL}}[z]$.

Second, one may interpret the removability or non-existence of such
singularities as indication of compatibility of the corresponding
models as one imposes $R[z]$ on a model that prescribes $E$ and
$R_0$. (Here the LTB model would be said to 'mimic' the given
cosmological constant model, c.f. \cite{ah}.) Applying such criteria
to Remark \ref{sing} one does not expect every LTB model to be
compatible with such a cosmological-constant model (at least not for
$z$ near $z_{\gL}$). However, from Theorem \ref{smooth} we do find,
as a check of our analysis, that the cosmological-constant models
for $0\leq\gO_{\gL}< 1$ are each compatible with at least one
LTB/FRW model: Our choice of $R_0$ identifies an optimal FRW model,
in the sense of Remark \ref{opt}.

Finally, one conjectures that these removable, $0/0$-type
singularities may yet lead to instability of numerical solutions of
the system (\ref{ode}) (but here at certain finite $z$ (!) c.f.
\textsection IV \cite{cr}). Such investigations are beyond the scope
of the present work.

\end{document}